\numberwithin{equation}{section}
\newcommand*\xbar[1]{%
   \hbox{%
     \vbox{%
       \hrule height 0.5pt 
       \kern0.25ex
       \hbox{%
         \kern-0.05em
         \ensuremath{#1}%
         \kern-0.1em
       }%
     }%
   }%
}
\newcommand{\ind}{\mathbbm{1}}
\newcommand{\var}{\operatorname{var}}
\newcommand{\eps}{\varepsilon}
\newcommand{\stirling}[2]{\genfrac{[}{]}{0pt}{}{#1}{#2}}
\def\CC{\mathbb{C}}
\def\EE{\mathbb{E}}
\def\NN{\mathbb{N}}
\def\bP{\mathbb{P}}
\def\RR{\mathbb{R}}
\def\RRd1{\mathbb{R}^{d+1}}
\def\ZZ{\mathbb{Z}}
\def\bE{\mathbf{E}}
\def\bP{\mathbf{P}}
\def\cF{\mathcal{F}}
\def\cI{\mathcal{I}}
\newcommand{\todistr}{\overset{d}{\underset{n\to\infty}\longrightarrow}}
\newcommand{\toprobab}{\overset{P}{\underset{n\to\infty}\longrightarrow}}
\newcommand{\toas}{\overset{a.s.}{\underset{n\to\infty}\longrightarrow}}
\newcommand{\ton}{\overset{}{\underset{n\to\infty}\longrightarrow}}
\newcommand{\tosim}{\underset{n\to\infty}{\sim}}
\def\dint{\textup{d}}
\def\pos{\textup{pos}}
\def\Bern{\textup{Bern}}
\newcommand{\relint}{\textup{relint}}
\theoremstyle{plain}
\newtheorem{theorem}{Theorem}[section]
\theoremstyle{definition}
\theoremstyle{remark}
\newtheorem{remark}[theorem]{Remark}
\begin{document}

\author{Thomas Godland}
\address{Thomas Godland: Institut f\"ur Mathematische Stochastik,
Westf\"alische Wilhelms-Universit\"at M\"unster,
Orl\'eans-Ring 10,
48149 M\"unster, Germany}
\email{thomas.godland@uni-muenster.de}

\author{Zakhar Kabluchko}
\address{Zakhar Kabluchko: Institut f\"ur Mathematische Stochastik,
Westf\"alische {Wilhelms-Uni\-ver\-sit\"at} M\"unster,
Orl\'eans-Ring 10,
48149 M\"unster, Germany}
\email{zakhar.kabluchko@uni-muenster.de}

\author{Christoph Th\"ale}
\address{Christoph Th\"ale: Fakult\"at f\"ur Mathematik,
Ruhr-Universit\"at Bochum, Universitätsstr. 150,
44780 Bochum, Germany}
\email{christoph.thaele@rub.de}

\title[Weyl random cones in high dimensions]{Random cones in high dimensions II: Weyl cones}

\keywords{Conic intrinsic volume, conic quermassintegral, high dimensions, limit theorem, phase transition, random cone, statistical dimension, stochastic geometry, threshold phenomenon, Weyl cone}

\subjclass[2010]{Primary: 52A22, 60D05.  Secondary: 52A23, 52A55, 60F05, 60F10}

\begin{abstract}
We consider two models of random cones together with their duals. Let $Y_1,\dots,Y_n$ be independent and identically distributed random vectors in $\RR^d$ whose distribution satisfies some mild condition. The random cones $G_{n,d}^A$ and $G_{n,d}^B$ are defined as the positive hulls $\pos\{Y_1-Y_2,\dots,Y_{n-1}-Y_n\}$, respectively $\pos\{Y_1-Y_2,\dots,Y_{n-1}-Y_n,Y_n\}$, conditioned on the event that the respective positive hull is not equal to $\RR^d$. We prove limit theorems for various expected geometric functionals of these random cones, as $n$ and $d$ tend to infinity in a coordinated way. This includes limit theorems for the expected number of $k$-faces and the $k$-th conic quermassintegrals, as $n$, $d$ and sometimes also $k$ tend to infinity simultaneously. Moreover, we uncover a phase transition in high dimensions for the expected statistical dimension for both models of random cones.
\end{abstract}

\maketitle

\tableofcontents

\section{Introduction}
A polyhedral cone (just called cone in this paper for simplicity)  $C\subset\RR^d$, $d\in\NN$, is defined as an intersection of finitely many closed half-spaces whose bounding hyperplanes pass through the origin. The present paper deals with polyhedral cones whose bounding hyperplanes are chosen randomly. Two natural models for random cones, the so-called Cover-Efron and Donoho-Tanner random cones together with their dual cones were already treated in part I~\cite{GKT2020_HighDimension1} of this series of papers. Let us recall that the \textit{Cover-Efron random cone} in $\RR^d$, for independent random vectors $X_1,\dots,X_n$ taking values in $\RR^d$ and being identically distributed according to a symmetric density like the standard Gaussian distribution on $\RR^d$, is defined as the random positive hull
$$
\pos\{X_1,\dots,X_n\}:=\bigg\{\sum_{i=1}^n\lambda_iX_i:\lambda_1,\dots,\lambda_n\ge 0\bigg\},
$$
conditioned on the event that $\pos\{X_1,\dots,X_n\}\neq\RR^d$. These cones were first introduced and studied in the classical work of Cover and Efron~\cite{CoverEfron}. The \textit{Donoho-Tanner random cone} in $\RR^d$ is defined as the random cone $\pos\{X_1,\dots,X_n\}$ -- without any conditioning -- and was introduced by Donoho and Tanner~\cite{DonohoTanner}. In part I~\cite{GKT2020_HighDimension1} of this series, continuing and expanding the work of Hug and Schneider~\cite{HugSchneiderThresholdPhenomena}, we proved limit theorems for various expected combinatorial and geometric functionals, like the expected number of $k$-(dimensional) faces or the conic intrinsic volumes, for Cover-Efron and Donoho-Tanner random cones, and their duals, in high-dimensions, that is, in regimes where the number of vectors $n$ and the dimension $d$ tend to infinity simultaneously in a coordinated way. In particular, the papers~\cite{DonohoTanner,GKT2020_HighDimension1,HugSchneiderThresholdPhenomena,HugSchneiderThresholdPhenomena2} uncovered a number of high-dimensional threshold phenomena and phase transitions.

In the present part II we apply similar methods as in~\cite{GKT2020_HighDimension1,HugSchneiderThresholdPhenomena} to develop analogous limit theorems for two different classes of random cones, the so-called Weyl random cones of type $A$ and $B$. To define them, let $Y_1,Y_2,\dots$ be a sequence of independent random vectors in $\RR^d$ distributed according to a probability measure $\mu$ on $\RR^d$ that assigns measure zero to each affine hyperplane. Then, as in~\cite{GodlandKabluchko}, we consider the following two random cones, for the notion of the dual of a cone we refer to \eqref{eq:dualcone} below.
\begin{itemize}
\item[(i)] Let $G_{n,d}^A$ be the random cone whose distribution is that of $\pos\{Y_1-Y_2,\dots,Y_{n-1}-Y_n\}$, conditioned on the event that $\pos\{Y_1-Y_2,\dots,Y_{n-1}-Y_n\}\neq\RR^d$. Then the \textit{Weyl random cone $W_{n,d}^A:=(G_{n,d}^A)^\circ$ of type $A$} is the dual cone of $G_{n,d}^A$.
\item[(ii)] Let $G_{n,d}^B$ be the random cone whose distribution is that of $\pos\{Y_1-Y_2,\dots,Y_{n-1}-Y_n,Y_n\}$, again conditioned on the event that this cone is different from $\RR^d$. Then the \textit{Weyl random cone $W_{n,d}^B:=(G_{n,d}^B)^\circ$ of type $B$} is the dual cone of $G_{n,d}^B$.
\end{itemize}
In Section \ref{sec:Tess+Cones} we will introduce the Weyl random cones in a different, but equivalent, way as typical conical cells of a conical Weyl random tessellations of type $A$ and $B$, respectively.

The combinatorial and geometric properties of the Weyl random cones $W_{n,d}^A$ and $W_{n,d}^B$ are closely linked to characteristics of the Weyl chambers of the classical reflection arrangements ${\rm arr}(A_{n-1})$ and ${\rm arr}(B_n)$ of types $A_{n-1}$ and $B_n$ in $\RR^n$, respectively. The latter are given by
\begin{align*}
{\rm arr}(A_{n-1}) &:= \{(e_i-e_j)^\perp : 1\leq i<j\leq n\},\\
{\rm arr}(B_n) &:= \{(e_i-e_j)^\perp,(e_i+e_j)^\perp : 1\leq i<j\leq n\}\cup\{e_i^\perp:1\leq i\leq n\},
\end{align*}
where $e_1,\ldots,e_n$ is the standard orthonormal basis in $\RR^n$, and the Weyl chambers are the closed polyhedral cones into which the hyperplanes of these arrangements dissect the space. It has been shown in \cite{GodlandKabluchko} that formulas for the expected number of $k$-faces or the expected conic intrinsic volumes of the Weyl random cones $W_{n,d}^A$ and $W_{n,d}^B$ can be reduced to questions about the number of faces or chambers of the reflection arrangements ${\rm arr}(A_{n-1})$ and ${\rm arr}(B_n)$ that are intersected by a random linear subspace. The answers to these questions in turn can be expressed in terms of the characteristic polynomials of the two hyperplane arrangements. 
The coefficients of these polynomials are known as the Stirling numbers of first kind $A(n,k)$ and their B-analogues $B(n,k)$, $k\in\{0,1,\ldots,n\}$. Both sequences induce a probability distribution on the discrete set $\{0,1,\ldots,n\}$ and limit theorems for the random variables distributed according to these laws play an essential role in this paper. We remark that the resulting distributions are convolutions of Bernoulli distributions. However, while in the setting of Cover-Efron and similar cones studied in~\cite{DonohoTanner,GKT2020_HighDimension1,HugSchneiderThresholdPhenomena,HugSchneiderThresholdPhenomena2} the parameters of these distributions were all the same, namely $1/2$, this is no more the case in the present paper, which makes the probabilistic arguments more involved.

To illustrate the types of results we develop in this paper, let us present two representative examples. To treat both types of random cones simultaneously, here and throughout the paper let $\blackdiamond$ be one of the symbols $A$ or $B$ and $W_{n,d}^\blackdiamond$, respectively $G_{n,d}^\blackdiamond$,  be the Weyl random cone and its dual. Also, put $\sigma_A:=1$ and $\sigma_B:=1/2$. We are interested in the expected number of $k$-faces $\bE f_k(G_{n,d}^\blackdiamond)$ of the random cones $G_{n,d}^\blackdiamond$, as $n\to\infty$. We consider the situation where both $k$ and $d$ depend on $n$ in such a way that $d=n-\sigma_\blackdiamond x\log n+o(\log n)$, as $n\to\infty$, and $x>0$ is constant. Theorem \ref{theorem:phase_trans_faces_dual_Weyl_A} below uncovers the following phase transition. If $x>1$, we show that
\begin{align*}
	\lim_{n\to\infty}\frac{\bE f_k(G_{n,d}^\blackdiamond)}{\binom{n+1-2\sigma_\blackdiamond}k}=
	\begin{cases}
		1				&: k=o(n),\\
		(1-\alpha)^{x-1}			&: k=\alpha n+o(n)\text{, }\alpha\in(0,1),\\
		0				&: k=n+o(n),
	\end{cases}
\end{align*}
while for $x\in(0,1)$ it holds that
\begin{align*}
	\lim_{n\to\infty}\frac{\bE f_k(G_{n,d}^\blackdiamond)}{\binom{n+1-2\sigma_\blackdiamond}k}=
	\begin{cases}
		1				&: k=n-\exp\{c\log n+o(\log n)\} \text{ with } c\in(x,1),\\
		1-\Phi(\alpha)				&: k=n-\exp\{{\sigma_{\blackdiamond}^{-1}(n-d-\alpha\sqrt{x\sigma_\blackdiamond\log n})}	\},\; \alpha\in\RR				,\\
		0				&: k=n-\exp\{c\log n+o(\log n)\} \text{ with } c\in(0,x),
	\end{cases}
\end{align*}
where $\Phi$ denotes the distribution function of a standard normal distributed random variable.
Furthermore, Theorem \ref{theorem:large_dev_f_k_dual_Weyl_A} yields a kind of large deviations principle for $\bE f_k(G_{n,d}^\blackdiamond)$. Namely, if we assume in addition that $k=n-\exp\{c\log n+o(\log n)\}$, then for all $x>0$ and $c\in(0,1)$ we prove that
\begin{align*}
	\lim_{n\to\infty}\frac{1}{\log n}\log\frac{\bE f_k(G_{n,d}^\blackdiamond)}{\binom{n+1-2\sigma_\blackdiamond}k}=
	\begin{cases}
		x\log c-c+1	&:x>1,\\
		x-x\log x+x\log c-c	&: x\in(0,1),c\in(0,x),\\
		0						&:x\in(0,1),c\in(x,1).
	\end{cases}
\end{align*}
Similar results are obtained for the so-called conic intrinsic volumes and the conic quermassintegrals of the random cones $G_{n,d}^\blackdiamond$ and $W_{n,d}^\blackdiamond$ as well.

\medspace

This paper is structured as follows. Section~\ref{sec:prelimiaries} introduces some notation, contains the formal definitions of various geometric functionals for convex cones and collects limit theorems for Stirling numbers of the first kind and their B-analogues. In Section~\ref{sec:Tess+Cones} we formally introduce the Weyl tessellations which are used to define the cones $G_{n,d}^A$  and $G_{n,d}^B$, or rather their duals, the Weyl random cones. We also rephrase there a number of known results on which our work is based. In Sections~\ref{sec:limit_theorems_Weyl_faces},~\ref{sec:limit_intr_vol_quermass} and~\ref{sec:limit_stat_dim} we state and prove the limit theorems for the expectations of various combinatorial and geometric functionals of the cones $G_{n,d}^A$ and $G_{n,d}^B$, as well as their dual cones.

\section{Preliminaries}\label{sec:prelimiaries}

\subsection{Notation}
In this paper $N(0,1)$ denotes a standard normal random variable and $\Phi$ the distribution function of $N(0,1)$, that is,
\begin{align*}
\Phi(x)=\int_{-\infty}^x\frac{1}{\sqrt{2\pi}}e^{-t^2/2} \dint t,\qquad x\in\RR.
\end{align*}
The almost sure convergence of sequences of random variables indexed by $n$ is denoted by $\toas$, the convergence in probability by $\toprobab$, while the convergence in distribution is denoted by $\todistr$. We slightly abuse notation and write $X_n\todistr X$ or $X_n\todistr \mu$ to indicate that the sequence of random variables $X_n$ converges in distribution to a random variable $X$ with law $\mu$.

Two sequences $(a_n)_{n\ge 0}$ and $(b_n)_{n\ge 0}$ of real numbers are asymptotically equivalent, as $n\to\infty$, if $a_n/b_n$ converges to $1$, as $n\to\infty$. This is denoted by $a_n\tosim b_n$, or $a_n\sim b_n$ if the index tending to infinity is clear from the context. Moreover, we use the well-known Landau-notation $o(a_n)$ for a sequence that tends to $0$, as $n\to\infty$, after being divided by $a_n$. Similarly, we say $b_n=O(a_n)$, as $n\to\infty$, if
$$
\limsup_{n\to\infty}\left|\frac{b_n}{a_n}\right|<\infty.
$$

\subsection{Limit theorems for Stirling numbers of the first kind and their \texorpdfstring{$\boldsymbol{B}$}{B}-analogues}\label{section:limit_theorem_stirling}

Throughout this paper, the arguments for most results are based on various limit theorems for Stirling numbers of the first kind and their so-called $B$-analogues. The Stirling number of the first kind $A(n,k)=\stirling{n}{k}$ is defined as the number of permutations of the set $\{1,\dots,n\}$, $n\in\NN$, having exactly $k\in\{1,\ldots,n\}$ cycles. Equivalently, the Stirling numbers are the coefficients of the polynomial
\begin{align}\label{eq:def_stirling1}
t(t+1)\cdot\ldots\cdot(t+n-1)=\sum_{k=1}^nA(n,k)t^k
\end{align}
and, by convention, we put $A(n,k)=0$ for $k\notin\{1,\dots,n\}$. Similarly, the $B$-analogues of the Stirling number of first kind are denoted by $B(n,k)$ and can be defined as the coefficients of the polynomial
\begin{align}\label{eq:def_stirling1b}
(t+1)(t+3)\cdot\ldots\cdot(t+2n-1)=\sum_{k=0}^nB(n,k)t^k;
\end{align}
again we put $B(n,k)=0$ for $k\notin\{0,\dots,n\}$.

Next, we introduce the following notation. For $\blackdiamond\in\{A,B\}$ we define
$$
\sigma_\blackdiamond:=\begin{cases}
1 &: \blackdiamond=A\\
{1\over 2} &: \blackdiamond=B.
\end{cases}
$$
Then, we can introduce the random variable $S_n^\blackdiamond$, $n\in\NN$, as the sum
$$
S_n^\blackdiamond := \sum\limits_{k=1}^n\Bern\Big({\sigma_\blackdiamond\over k}\Big),
$$
where $\{\Bern(\sigma_\blackdiamond/k):k\geq 1\}$ is a sequence of independent Bernoulli random variables with parameters as indicated in brackets. More precisely, for $p\in[0,1]$, the distribution of $\Bern(p)$ is given by
$$
\bP[\Bern(p)=1] = p\qquad\text{and}\qquad\bP[\Bern(p)=0] = 1-p.
$$
We note that $S_n^\blackdiamond$ is defined in such a way that its probability mass function is given by the (normalized) Stirling numbers (if $\blackdiamond=A$) or their (normalized) $B$-analogues (if $\blackdiamond=B$). That is,
$$
\bP[S_n^\blackdiamond=k] = {\blackdiamond(n,k)\over n!}\,\sigma_\blackdiamond^n,\qquad k\in\{0,1,\ldots,n\}.
$$
Indeed, this easily follows from the product structure of the generating function of $S_n^\blackdiamond$, for example.
From \cite[Example 2.1.3]{Feray_modphi2016} for case $\blackdiamond=A$ and \cite[Lemma~5.3]{KabluchkoVysotskyZaporozhets} for case $\blackdiamond=B$ it is known that the sequence of random variables $S_n^\blackdiamond$ satisfies the following mod-Poisson convergence:
\begin{equation}\label{eq:ModPoisson}
\lim_{n\to\infty}{\bE[e^{zS_n^\blackdiamond}]\over e^{\sigma_\blackdiamond\log n(e^z-1)}} = \Psi_\blackdiamond(z):={1\over\Gamma(\sigma_\blackdiamond(e^z+2(1-\sigma_\blackdiamond)))},\qquad\qquad z\in\CC,
\end{equation}
where we recall that $e^{\sigma_\blackdiamond\log n(e^z-1)}$ is the moment generating function of a Poisson random variable with parameter $\sigma_\blackdiamond\log n$. In fact, $\Psi_A(z)=1/\Gamma(e^z)$ as in \cite[Example 2.1.3]{Feray_modphi2016}, while $\Psi_B(z)=1/\Gamma({e^z+1\over 2})$. In the latter case, an application of Legendre's duplication formula yields that $\Psi_B$ can be rewritten as $\Psi_B(z)={2^{e^z}\Gamma({e^z\over 2})\over 2\sqrt{\pi}\Gamma(e^z)}$, which is the form of $\Psi_B$ used in \cite[Lemma~5.3]{KabluchkoVysotskyZaporozhets}.

From the mod-Poisson convergence \eqref{eq:ModPoisson} a number of probabilistic limit theorems and estimates follow, see \cite[Theorem~3.3.1]{Feray_modphi2016} for (i), \cite[Theorem 5.2]{KabluchkoVysotskyZaporozhets} for (ii), \cite[Example~3.2.6]{Feray_modphi2016} for claim (iv) and \cite[Theorem~3.2.2]{Feray_modphi2016} for (v). 
\begin{itemize}
\item[(i)] We have the central limit theorem
\begin{align}\label{eq:CLT_Stirling}
{S_n^\blackdiamond-\sigma_\blackdiamond\log n\over\sqrt{\sigma_\blackdiamond\log n}} \underset{n\to\infty}{\overset{d}{\longrightarrow}} N(0,1),
\end{align}
where $N(0,1)$ is a standard Gaussian random variable.

\item[(ii)] We have the following central limit type result for the random variables $S_n^\blackdiamond$:
\begin{align}\label{eq:CLT_stirling_kind_of}
\lim_{n\to\infty}2\sum_{\ell=1,3,\ldots}\bP\big[S_n^\blackdiamond = \sigma_\blackdiamond\log n+v_n\sqrt{\sigma_\blackdiamond\log n}-\ell\big] = \Phi(v),
\end{align}
where $v_n\to v$ is any convergent real sequence such that $\sigma_\blackdiamond\log n+v_n\sqrt{\sigma_\blackdiamond\log n}\in\textcolor{green}{\NN}$ for all $n\in\mathbb N$.
\item[(iii)] We have the following weak law of large numbers
\begin{align}\label{eq:weakLLN}
\frac{S_n^\blackdiamond}{\sigma_\blackdiamond\log n}\toprobab 1,
\end{align}
which follows directly from the central limit theorem~\eqref{eq:CLT_Stirling}.

\item[(iv)]In what follows, we let $(z_n)_{n\in\NN}$ be a sequence satisfying $z_n\to z\in\RR$, as $n\to\infty$, and $z_n\sigma_\blackdiamond\log n\in\NN$ for each $n\in\NN$.  As $n\to\infty$, we have the asymptotic relationships
\begin{equation*}
\bP\big[S_n^\blackdiamond = z_n\sigma_\blackdiamond\log n\big] = {n^{-(z_n\log z_n-z_n+1)}\over\sqrt{2\pi z\log n}}\,\Psi_\blackdiamond(\log z)\Big(1+O\Big({1\over\log n}\Big)\Big)
\end{equation*}
for $z>0$ and
\begin{equation}\label{eq:asympt_S_n>=}
\bP\big[S_n^\blackdiamond \geq z_n\sigma_\blackdiamond\log n\big] = {n^{-(z_n\log z_n-z_n+1)}\over\sqrt{2\pi z\log n}}\,{z\over z-1}\Psi_\blackdiamond(\log z)\Big(1+O\Big({1\over\log n}\Big)\Big)
\end{equation}
for $z>1$.

\item[(v)] For $\ell\in\ZZ$ and $z>0$ we have that
\begin{equation}\label{eq:AsymptoticsGEQwithZ_noSum}
\bP\big[S_n^\blackdiamond=z_n\sigma_\blackdiamond\log n+\ell\big]\underset{n\to\infty}{\sim} {n^{-(z_n\log z_n-z_n+1)}\over\sqrt{2\pi z\log n}}\,\Psi_\blackdiamond(\log z)\,z^{-\sigma_\blackdiamond \ell}.
\end{equation}
\end{itemize}

We note that from (iv) it follows that, for $z>1$,
\begin{align}\label{eq:AsymptoticsGEQwithZ>1}
\sum_{\ell=1,3,\ldots}\bP\big[S_n^\blackdiamond = z_n\sigma_\blackdiamond\log n+\ell\big]\underset{n\to\infty}{\sim}{n^{-(z_n\log z_n-z_n+1)}\over\sqrt{2\pi z\log n}}\,\Psi_\blackdiamond(\log z)\,{z^{\sigma_\blackdiamond}\over z^{2\sigma_\blackdiamond}-1}\underset{n\to\infty}{\longrightarrow} 0,
\end{align}
while for $z\in(0,1)$ it holds that
\begin{align}\label{eq:AsymptoticsGEQwithZ<1}
\sum_{\ell=1,3,\ldots}\bP\big[S_n^\blackdiamond = z_n\sigma_\blackdiamond\log n+\ell\big]\underset{n\to\infty}{\sim}{1\over 2}-{n^{-(z_n\log z_n-z_n+1)}\over\sqrt{2\pi z\log n}}\,\Psi_\blackdiamond(\log z)\,{z^{\sigma_\blackdiamond}\over 1- z^{2\sigma_\blackdiamond}}\underset{n\to\infty}{\longrightarrow} {1\over 2}.
\end{align}
Indeed, the first relation is a consequence of the dominated convergence theorem, whose application can formally be justified by the fact that the sequence of Stirling numbers and the sequence of their $B$-analogues are log-concave (which in turn is a consequence of recurrence relations for $A(n,\ell)$ and $B(n,\ell)$). The second claim follows the same way, additionally using that
\begin{align}\label{eq:stirling_numbers_rest}
\blackdiamond(n,0)+\blackdiamond(n,2)+\ldots=\blackdiamond(n,1)+\blackdiamond(n,3)+\ldots={n!\over 2\sigma_\blackdiamond^n},
\end{align}
for all $n\geq 2$,  a relation which arises from inserting $t=\pm 1$ into~\eqref{eq:def_stirling1} and~\eqref{eq:def_stirling1b}.

\subsection{Convex cones and their intrinsic volumes}

The positive hull of a set $M\subset\RR^d$ is defined as
\begin{align*}
\pos\, M:=\Big\{\sum_{i=1}^m\lambda_it_i:\,m\in\NN,t_1,\dots,t_m\in M,\lambda_1,\dots,\lambda_m\ge 0\Big\}.
\end{align*}
A convex set $C\subset\RR^d$ is called a \textit{convex polyhedral cone} (or just a cone) if it is a positive hull of finitely many vectors. 

A linear hyperplane $H$ supports a cone $C$, provided that $C$ is contained in one of the two closed half-spaces determined by $H$. If $H$ is a supporting hyperplane of $C$ then $C\cap H$ is called a \textit{face} of $C$. We say that a face has dimension $k\in\{0,1,\ldots,d\}$ if its linear hull is a $k$-dimensional linear subspace of $\RR^d$. The set of all $k$-dimensional faces ($k$-faces for short) of a cone $C$ is denoted by $\cF_k(C)$ and we let $f_k(C):=\#\cF_k(C)$ be the number of $k$-faces of $C$.

The \textit{dual cone $C^\circ$} of a cone $C\subset\RR^d$ is given as
\begin{align}\label{eq:dualcone}
C^\circ:=\{v\in\RR^d:\langle v,x\rangle\le 0\ \text{for all}\ x\in C\},
\end{align}
where $\langle\,\cdot\,,\,\cdot\,\rangle$ is the usual Euclidean scalar product in $\RR^d$.

We now introduce two series of geometric quantities associated with a convex cone $C\subset\RR^d$. For $k\in\{0,1,\ldots,d\}$ the \textit{k-th conic intrinsic volume} $\upsilon_k(C)$ of $C$ is defined as
\begin{align*}
\upsilon_k(C):=\sum_{F\in\cF_k(C)}\bP[\Pi_C(g)\in\relint\, F],
\end{align*}
where $g$ is a standard Gaussian random vector in $\RR^d$, $\relint \,F$ denotes the relative interior of $F$ and $\Pi_C(x)$, $x\in\RR^d$, is the point $y\in C$ minimizing the Euclidean distance to $x$, the so-called metric projection of $x$ onto $C$. An equivalent definition of the conic intrinsic volumes using the spherical Steiner formula can be found in~\cite[Section~6.5]{SW}. For further properties we refer to in~\cite[Section~2.2]{AmelunxenLotz}, and also~\cite[Section~6.5]{SW}.

Next, for a cone $C\subset\RR^d$ that is not a linear subspace, the \textit{$k$-th conic quermassintegral} of $C$, $k\in\{0,1,\dots,d\}$, is defined as
\begin{align*}
U_k(C):=\frac 12\bP[C\cap W_{d-k}\neq\{0\}],
\end{align*}
where $W_{d-k}$ is a uniformly distributed $(d-k)$-subspace random in the Grassmannian $G(d,d-k)$ of all $(d-k)$-linear subspaces of $\RR^d$.
For a $j$-dimensional linear subspace $L_j\subset\RR^d$, we put
\begin{align*}
U_k(L_j) :=
\begin{cases}
1		&: j-k>0\text{ and odd}\\
0		&: j-k\le 0\text{ or even}.
\end{cases}
\end{align*}
We remark that if $C$ is not a linear subspace, the quantity $2U_k(C)$ is also known as the \textit{$k$-th Grassmann angle} of $C$ and was introduced by Grünbaum~\cite{gruenbaum_grass_angles}. For further properties of the conic quermassintegrals see also~\cite[Section~2]{HugSchneiderConicalTessellations}.

\section{Weyl tessellations, Weyl random cones and their duals}\label{sec:Tess+Cones}

Fix a probability measure $\mu$ on $\RR^d$, which satisfies $\mu(H)=0$ for each (linear and affine) hyperplane $H$ in $\RR^d$, and let $Y_1,Y_2,\ldots$ be a sequence of independent random vectors with distribution $\mu$ (these assumptions can be slightly weakened, see~\cite{GodlandKabluchko}). Also, let $n\geq d+1$. The \textit{Weyl tessellation of type $A$} is the conical tessellation of $\RR^d$ induced by the $n(n-1)/2$ random hyperplanes
\begin{align*}
(Y_i-Y_j)^\perp,\qquad 1\le i<j\le n,
\end{align*}
while by the \textit{Weyl tessellation of type $B$} we understand the conical random tessellation that is induced by the $n^2$ hyperplanes
\begin{align*}
	(Y_i-Y_j)^\perp,&\qquad 1\le i<j\le n,\\
	(Y_i+Y_j)^\perp,&\qquad 1\le i<j\le n,\\
	Y_i^\perp,&\qquad 1\le i\le n.
\end{align*}
A realization of both Weyl tessellations is shown in Figure~\ref{pic:1}.
\begin{figure}[!t]
\centering
\includegraphics[scale=0.33]{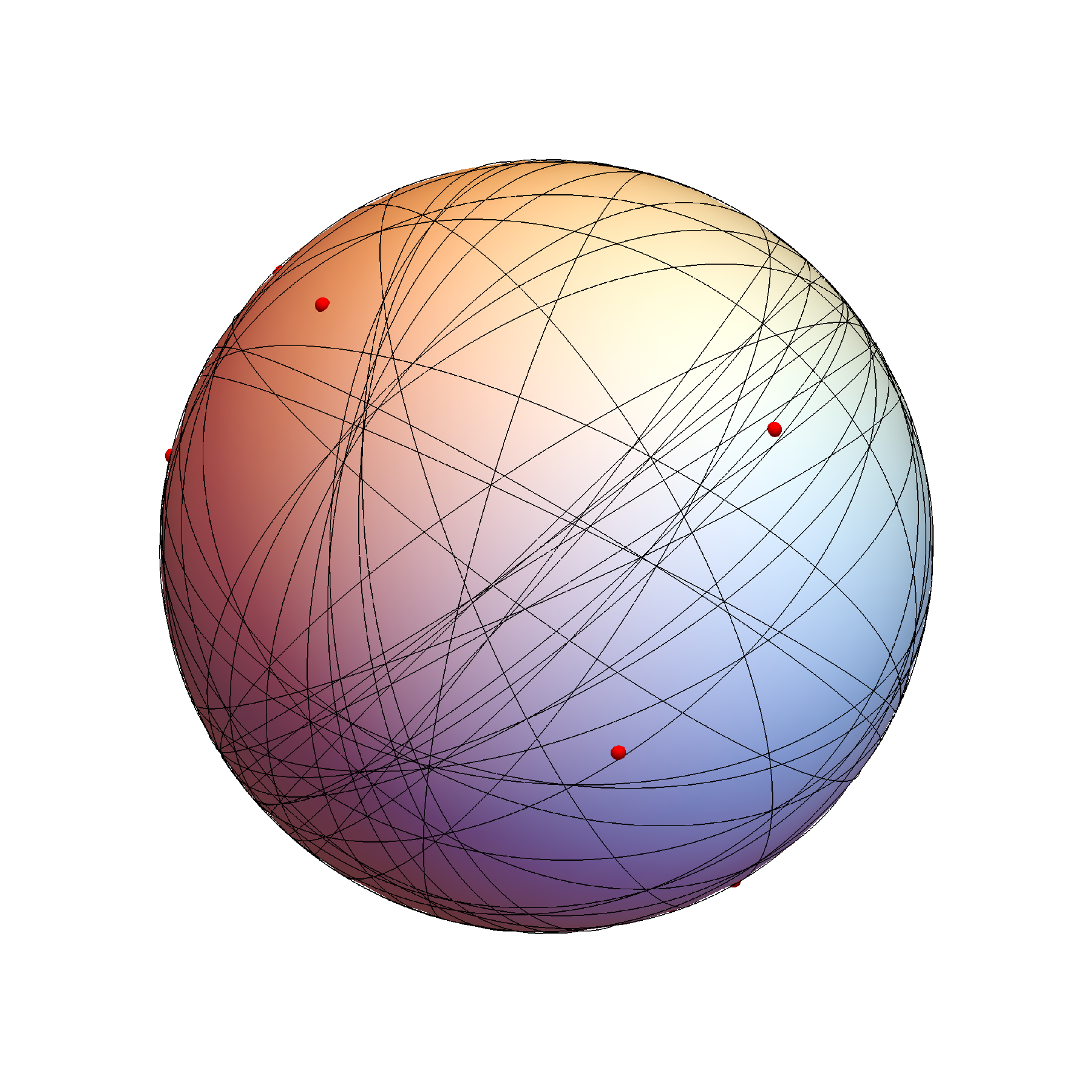}
\includegraphics[scale=0.33]{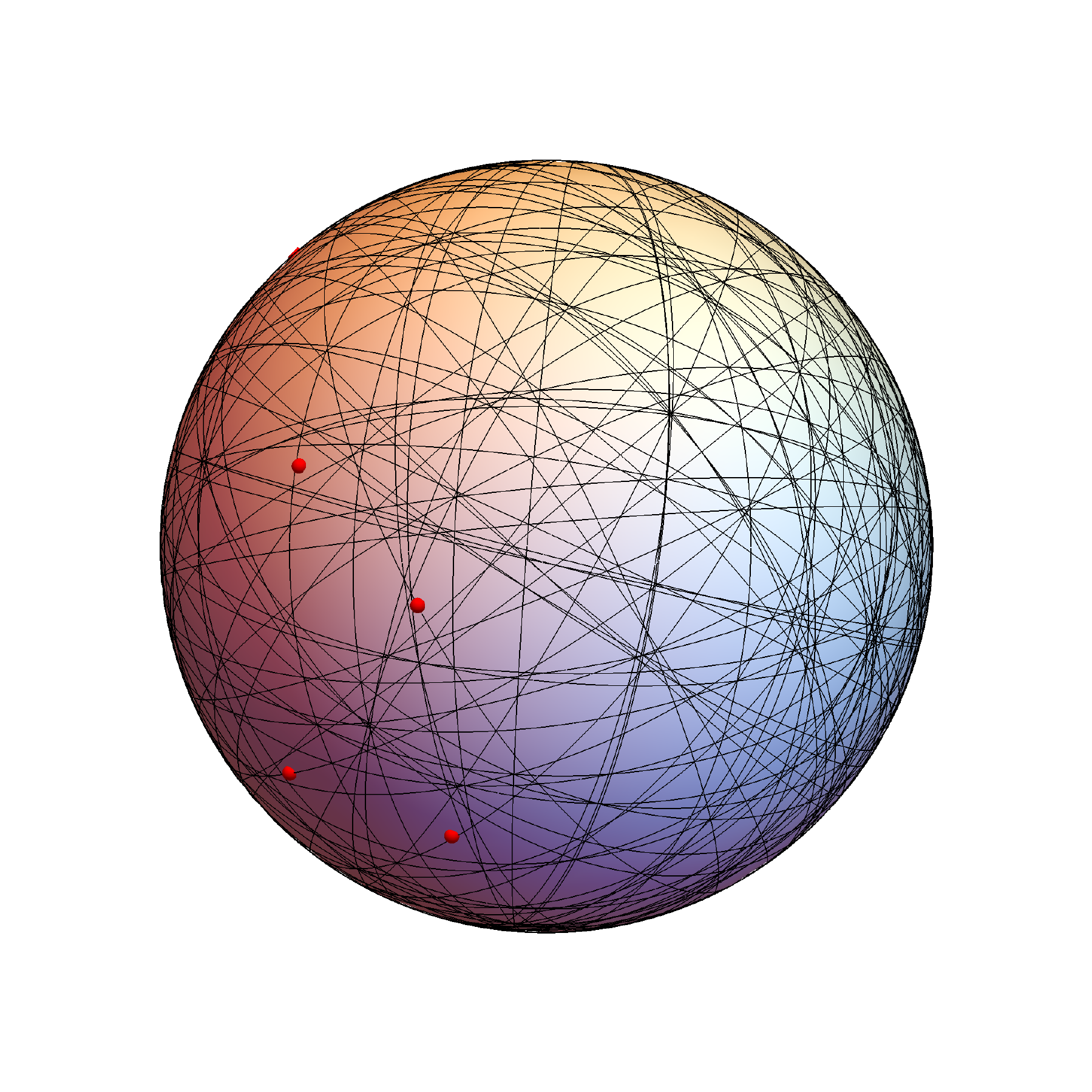}
\caption{
Weyl tessellations of type $A$ (left) and $B$ (right) intersected with the unit sphere in $\RR^3$ generated by $n=10$ points.
The vectors $Y_1,\ldots,Y_n$ (red points) were drawn independently and uniformly from the unit sphere.
}\label{pic:1}
\end{figure}
From \cite[Corollaries 3.4 and 4.4]{GodlandKabluchko} it follows that the Weyl tessellation of type $\blackdiamond$ with $\blackdiamond\in\{A,B\}$ almost surely consists of
$$
D^\blackdiamond(n,d) := 2[\blackdiamond(n,n-d+1)+\blackdiamond(n,n-d+3)+\ldots]
$$
random convex cones, independently of the choice of $\mu$. The random \textit{Weyl cone $W_{n,d}^\blackdiamond$ of type $\blackdiamond$} can now be defined as a random cone chosen uniformly at random from the collection of $D^\blackdiamond(n,d)$ cones in the Weyl tessellation of type $\blackdiamond\in\{A,B\}$. In what follows we will denote by $G_{n,d}^\blackdiamond:=(W_{n,d}^\blackdiamond)^\circ$ the dual of the Weyl cone.
In~\cite{GodlandKabluchko} it was shown that $G_{n,d}^\blackdiamond$ can equivalently be defined as the random cone $\pos\{Y_1-Y_2,\dots,Y_{n-1}-Y_n\}$ (in the case $\blackdiamond=A$) or as the random cone $\pos\{Y_1-Y_2,\dots,Y_{n-1}-Y_n,Y_n\}$ (in the case $\blackdiamond=B$) conditioned on the event that the respective positive hull is not equal to $\RR^d$.

We now rephrase, in a unified way, the explicit formulas for the expected conic intrinsic volumes, the expected the conic quermassintegrals and the expected face numbers of Weyl random cones and their duals:
\begin{eqnarray}
	\bE[\nu_k(W_{n,d}^\blackdiamond)] &=& \begin{cases}
		{\blackdiamond(n,n-d+k)\over D^\blackdiamond(n,d)} &: k\in\{1,\ldots,d\}\\[2mm]
		{D^\blackdiamond(n,d)-D^\blackdiamond(n,d-1)\over 2D^\blackdiamond(n,d)} &: k=0,
	\end{cases}\label{eq:IntVolWeyl}\\
	\bE[\nu_k(G_{n,d}^\blackdiamond)] &=&  \begin{cases}
		{\blackdiamond(n,n-k)\over D^\blackdiamond(n,d)} &: k\in\{0,\ldots,d-1\}\\[2mm]
		{D^\blackdiamond(n,d)-D^\blackdiamond(n,d-1)\over 2D^\blackdiamond(n,d)} &: k=d,
	\end{cases}\label{eq:IntVolWeylDual}
\end{eqnarray}
see \cite[Corollary~1.10]{GodlandKabluchko},
\begin{eqnarray}
	\bE[U_k(W_{n,d}^\blackdiamond)] &=& {D^\blackdiamond(n,d-k)\over 2D^\blackdiamond(n,d)},\qquad\qquad\quad\,\, k\in\{0,1,\ldots,d-1\},\label{eq:QuermassIntWeyl}\\
	\bE[U_k(G_{n,d}^\blackdiamond)] &=& {D^\blackdiamond(n,d)-D^\blackdiamond(n,k)\over 2D^\blackdiamond(n,d)},\qquad k\in\{1,\ldots,d\},\label{eq:QuermassIntWeylDual}
\end{eqnarray}
see \cite[Corollary~1.9]{GodlandKabluchko}, and
\begin{eqnarray}
	\bE[f_k(W_{n,d}^\blackdiamond)] &=& {{n+1-2\sigma_\blackdiamond\choose d-k}D^\blackdiamond(n-d+k,k)\over \sigma_\blackdiamond^{d-k}D^\blackdiamond(n,d)}{n!\over(n-d+k)!},\qquad k\in\{1,\ldots,d\},\label{eq:FaceNumberWeyl}\\
	\bE[f_k(G_{n,d}^\blackdiamond)] &=& {{n+1-2\sigma_\blackdiamond\choose k}D^\blackdiamond(n-k,d-k)\over \sigma_\blackdiamond^{k}D^\blackdiamond(n,d)}{n!\over(n-k)!},\qquad\qquad\! k\in\{0,1,\ldots,d-1\},\label{eq:FaceNumberWeylDual}
\end{eqnarray}
see \cite[Theorems~1.5 and~1.7]{GodlandKabluchko}. We recall that $\sigma_\blackdiamond=1$ for $\blackdiamond=A$ and $\sigma_\blackdiamond=1/2$ if $\blackdiamond=B$.

\section{Limit theorems for the expected number of faces}\label{sec:limit_theorems_Weyl_faces}

In the remaining sections of this paper, we develop various limit theorems for the expectations of various geometric functionals of the Weyl random cones $W_{n,d}^\blackdiamond$ and their duals $G_{n,d}^\blackdiamond$ for both types $\blackdiamond=A$ and $\blackdiamond=B$. In particular, in the present section we consider the expected number of $k$-faces, in the next section the conic intrinsic volumes as well as the conic quermassintegrals, and finally in Section \ref{sec:limit_stat_dim}, the statistical dimension of the above random cones. In each of these results, we consider a regime where $n\to\infty$ and, as a function of $n $, also $d=d(n)\to\infty$ in a coordinated way.

In the present section, we want to understand the asymptotic behaviour of the expected number of $k$-faces of the random cones introduced in Section \ref{sec:Tess+Cones} in high dimensions, that is, as $n$, the dimension $d$, and in some cases also $k$ tend to infinity simultaneously in a coordinated way. The next theorem can be considered as an analogue to~\cite[Theorem~7]{HugSchneiderThresholdPhenomena} in the setting of Weyl cones and uncovers threshold phenomena in $k$ for $\bE f_k(G_{n,d}^\blackdiamond)$ divided by $\binom{n+1-2\sigma_\blackdiamond}{k}$. Note that we divide by this binomial coefficient since any $k$-face of $G_{n,d}^\blackdiamond$ almost surely is the positive hull of $k$ vectors from $Y_1-Y_2,\dots,Y_{n-1}-Y_n$ (for $\blackdiamond=A$) or from $Y_1-Y_2,\dots,Y_{n-1}-Y_n,Y_n$ (for $\blackdiamond=B$) and there are $\binom{n+1-2\sigma_\blackdiamond}{k}$ possible choices.

\begin{theorem}\label{theorem:phase_trans_faces_dual_Weyl_A}
Let $\blackdiamond\in\{A,B\}$ and $G_{n,d}^\blackdiamond$ be the dual of the Weyl random cone $W_{n,d}^\blackdiamond$. Consider $k=k(n)$ and $d=d(n)$ and suppose that
$$
d=n-\sigma_\blackdiamond x\log n+o(\log n),\qquad\text{as }n\to\infty.
$$
In the case $x>1$, we have that
\begin{align*}
\lim_{n\to\infty}\frac{\bE f_k(G_{n,d}^\blackdiamond)}{\binom{n+1-2\sigma_\blackdiamond}k}=
\begin{cases}
1				&: k=o(n),\\
(1-\alpha)^{x-1}			&: k=\alpha n+o(n)\text{, }\alpha\in(0,1),\\
0				&: k=n+o(n),
\end{cases}
\end{align*}
while for $x\in(0,1)$ it holds that
\begin{align*}
\lim_{n\to\infty}\frac{\bE f_k(G_{n,d}^\blackdiamond)}{\binom{n+1-2\sigma_\blackdiamond}k}=
\begin{cases}
1				&: k=n-\exp\{c\log n+o(\log n)\} \text{ with } c\in(x,1),\\
1-\Phi(\alpha)				&: k=n-\exp\{{\sigma_{\blackdiamond}^{-1}(n-d-\alpha\sqrt{x\sigma_\blackdiamond\log n})}	\},\; \alpha\in\RR				,\\
0				&: k=n-\exp\{c\log n+o(\log n)\} \text{ with } c\in(0,x).
\end{cases}
\end{align*}
\end{theorem}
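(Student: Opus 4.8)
The plan is to reduce the ratio in the theorem to a ratio of tail probabilities of the random variables $S_n^\blackdiamond$, and then feed it into the sharp asymptotics collected in Section~\ref{section:limit_theorem_stirling}. Starting from the exact formula \eqref{eq:FaceNumberWeylDual}, I would substitute the probabilistic representation $\blackdiamond(m,\ell)=\sigma_\blackdiamond^{-m}m!\,\bP[S_m^\blackdiamond=\ell]$ together with $D^\blackdiamond(m,e)=2\sum_{j\ge0}\blackdiamond(m,m-e+1+2j)$. Writing
$$
T_m:=\sum_{\ell=1,3,5,\dots}\bP\big[S_m^\blackdiamond=(n-d)+\ell\big]
$$
and using the crucial identity $(n-k)-(d-k)=n-d$ (so the summation threshold is unchanged when passing from index $n$ to $n-k$), all factorials and all powers of $\sigma_\blackdiamond$ cancel, leaving the remarkably clean expression
$$
\frac{\bE f_k(G_{n,d}^\blackdiamond)}{\binom{n+1-2\sigma_\blackdiamond}{k}}=\frac{T_{n-k}}{T_n}.
$$
This is the key simplification; everything afterwards is the asymptotic analysis of the parity-restricted upper-tail sums $T_{n-k}$ and $T_n$ of $S_{n-k}^\blackdiamond$ and $S_n^\blackdiamond$ at the common level $n-d=\sigma_\blackdiamond x\log n+o(\log n)$. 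Since $z_n:=(n-d)/(\sigma_\blackdiamond\log n)\to x$, this level sits at $x$ times the centering $\sigma_\blackdiamond\log n$ of $S_n^\blackdiamond$; for $x>1$ the denominator $T_n$ decays and is given by \eqref{eq:AsymptoticsGEQwithZ>1}, while for $x\in(0,1)$ one has $T_n\to\tfrac12$ by \eqref{eq:AsymptoticsGEQwithZ<1}.

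For the case $x>1$, in the regimes $k=o(n)$ and $k=\alpha n+o(n)$ the index $n-k$ still satisfies $\log(n-k)\sim\log n$, so $z_{n-k}:=(n-d)/(\sigma_\blackdiamond\log(n-k))\to x>1$ as well, and both $T_{n-k}$ and $T_n$ are described by \eqref{eq:AsymptoticsGEQwithZ>1}. Taking the ratio, the prefactors $\Psi_\blackdiamond(\log x)$, $x^{\sigma_\blackdiamond}/(x^{2\sigma_\blackdiamond}-1)$ and the square-root factors cancel in the limit, and only the ratio of the exponential terms $(n-k)^{-g(z_{n-k})}/n^{-g(z_n)}$ survives, where $g(z)=z\log z-z+1$. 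Using the exact identity $z_{n-k}\log(n-k)=z_n\log n=(n-d)/\sigma_\blackdiamond$ the logarithm of this ratio telescopes to
$$
\frac{n-d}{\sigma_\blackdiamond}\,\log\frac{\log(n-k)}{\log n}+\log\frac{n}{n-k}\longrightarrow(x-1)\log(1-\alpha),
$$
yielding the limits $1$ and $(1-\alpha)^{x-1}$. For $k=n+o(n)$ the same telescoped quantity tends to $-\infty$, giving limit $0$; since here $z_{n-k}$ need not converge to $x$, I would instead bound $T_{n-k}$ crudely from above (via log-concavity and the exponential decay of the upper tail) to conclude $T_{n-k}/T_n\to0$.

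For the case $x\in(0,1)$ the denominator satisfies $T_n\to\tfrac12$, so the behaviour is dictated entirely by $T_{n-k}$. With $k=n-\exp\{c\log n+o(\log n)\}$ one has $\log(n-k)\sim c\log n$, hence $z_{n-k}\to x/c$, i.e.\ the level $n-d$ sits at $x/c$ times the centering $\sigma_\blackdiamond\log(n-k)$ of $S_{n-k}^\blackdiamond$. If $c\in(x,1)$ then $x/c<1$ and \eqref{eq:AsymptoticsGEQwithZ<1} gives $T_{n-k}\to\tfrac12$, so the ratio tends to $1$; if $c\in(0,x)$ then $x/c>1$, \eqref{eq:AsymptoticsGEQwithZ>1} gives $T_{n-k}\to0$, and the ratio tends to $0$. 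The delicate boundary $c=x$ is exactly the central-limit window: the scaling $k=n-\exp\{\sigma_\blackdiamond^{-1}(n-d-\alpha\sqrt{x\sigma_\blackdiamond\log n})\}$ is chosen so that $n-d=\sigma_\blackdiamond\log(n-k)+\alpha\sqrt{\sigma_\blackdiamond\log(n-k)}\,(1+o(1))$, i.e.\ the level is $\alpha$ standard deviations above $\sigma_\blackdiamond\log(n-k)$. Here I would first pass from the upper parity-sum $T_{n-k}$ to the complementary lower parity-sum using \eqref{eq:stirling_numbers_rest}, which gives $T_{n-k}+\sum_{\ell=1,3,\dots}\bP[S_{n-k}^\blackdiamond=(n-d)-\ell]=\tfrac12$, and then apply the central-limit-type statement \eqref{eq:CLT_stirling_kind_of} with $v_{n-k}\to\alpha$ to the lower sum to obtain $T_{n-k}\to\tfrac12(1-\Phi(\alpha))$; dividing by $T_n\to\tfrac12$ produces $1-\Phi(\alpha)$.

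The routine parts are the algebraic reduction and the two extreme regimes, where the sharp asymptotics apply verbatim. The genuinely delicate points are twofold. First, the boundary scaling $c=x$: one must verify $v_{n-k}\to\alpha$ under the stated parametrisation, carefully tracking the $o(\log n)$ errors in $d$ and in the exponent defining $k$ (since $\sqrt{x\sigma_\blackdiamond\log n}\sim\sqrt{\sigma_\blackdiamond\log(n-k)}$), and correctly reflect the upper tail into the lower tail to match the form of \eqref{eq:CLT_stirling_kind_of}. Second, the regime $k=n+o(n)$ for $x>1$, where $n-k$ may grow only polylogarithmically so that $z_{n-k}$ fails to converge and the clean ratio of \eqref{eq:AsymptoticsGEQwithZ>1} is unavailable, forcing a separate monotonicity or domination estimate. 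Throughout, the parity and integrality bookkeeping — ensuring that $n-d$ has a fixed parity relative to the odd-shift summation and that the central-limit normalisation lands in $\NN$ — must be handled uniformly.
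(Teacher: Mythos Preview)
Your proposal is correct and follows essentially the same route as the paper: the same reduction of the ratio to $T_{n-k}/T_n$ via \eqref{eq:FaceNumberWeylDual}, the same use of \eqref{eq:AsymptoticsGEQwithZ>1} and \eqref{eq:AsymptoticsGEQwithZ<1} in the respective regimes, the same telescoping of the exponential terms for $x>1$, and the same reflection via \eqref{eq:stirling_numbers_rest} combined with \eqref{eq:CLT_stirling_kind_of} at the critical window. The one place where the paper is more concrete than your sketch is the sub-case $k=n+o(n)$ for $x>1$: there the paper implements your ``separate monotonicity or domination estimate'' by setting $k'=\lfloor(1-\varepsilon)k\rfloor$, using the natural coupling $S_{n-k}^\blackdiamond\le S_{n-k'}^\blackdiamond$ to bound $T_{n-k}\le\bP[S_{n-k'}^\blackdiamond\ge n-d]$, then invoking \eqref{eq:asympt_S_n>=} and the already-established case $\alpha=1-\varepsilon$ before letting $\varepsilon\downarrow0$.
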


\begin{proof}
In the given regime, we have that $d=n-\sigma_\blackdiamond x_n\log n$ for a sequence $(x_n)_{n\in\NN}$ such that $\lim_{n\to\infty}x_n=x>0$ and $\sigma_\blackdiamond x_n\log n\in\NN$ for each $n\in\NN$. At first, we use~\eqref{eq:FaceNumberWeylDual} and define $y_n:=(x_n\log n)/\log(n-k)$ to obtain
\begin{align}
\frac{\bE f_k(G_{n,d}^\blackdiamond)}{\binom{n+1-2\sigma_\blackdiamond}k}
&	=\frac{D^\blackdiamond(n-k,d-k)}{D^\blackdiamond(n,d)}\frac{n!}{\sigma_\blackdiamond^{k}(n-k)!}\notag\\
&=\frac{n!}{\sigma_\blackdiamond^{k}(n-k)!}{\sum\limits_{\ell=1,3,\ldots}\blackdiamond(n-k,n-d+\ell)\over\sum\limits_{\ell=1,3,\ldots}\blackdiamond(n,n-d+\ell)}\notag\\
&={\sum\limits_{\ell=1,3,\ldots}{\blackdiamond(n-k,n-d+\ell)\over(n-k)!}\sigma_\blackdiamond^{n-k}\over\sum\limits_{\ell=1,3,\ldots}{\blackdiamond(n,n-d+\ell)\over n!}\sigma_\blackdiamond^n}\notag\\
&	=\frac{\sum\limits_{\ell=1,3,\dots}\bP[S_{n-k}^\blackdiamond=n-d+\ell]}{\sum\limits_{\ell=1,3,\dots}\bP[S_n^\blackdiamond=n-d+\ell]}\label{eq:face_numbers_2}\\
&	=\frac{\sum\limits_{\ell=1,3,\dots}\bP[S_{n-k}^\blackdiamond=y_n\sigma_\blackdiamond\log(n-k)+\ell]}{\sum\limits_{\ell=1,3,\dots}\bP[S_n^\blackdiamond=x_n\sigma_\blackdiamond\log n+\ell]},\label{eq:face_numbers_1}
\end{align}
where $S_n^\blackdiamond$ (and similarly $S_{n-k}^\blackdiamond$) is the random variable with distribution $\bP[S_n^\blackdiamond=k]={\blackdiamond(n,k)\over n!}\sigma_\blackdiamond^n$, $k\in\{0,1,\ldots,n\}$ as considered in Section \ref{section:limit_theorem_stirling}.

\vspace*{2mm}
\noindent
\textsc{Case 1.}
Suppose first that $x>1$. In all regimes for $k$ it holds that $k=\alpha n+o(n)$ for a suitable $\alpha\in[0,1]$. Moreover, for $\alpha\in[0,1)$, the sequence $y_n$ converges to $x$, as $n\to\infty$. Inserting the asymptotic relation~\eqref{eq:AsymptoticsGEQwithZ>1} into both the numerator and the denominator of~\eqref{eq:face_numbers_1} yields
\begin{align}\label{eq:asym_faces_dual_Weyl_A}
\frac{\bE f_k(G_{n,d}^\blackdiamond)}{\binom{n+1-2\sigma_\blackdiamond}k}
&	\tosim \frac{\sqrt{\log n}}{\sqrt{\log(n-k)}}\frac{n}{n-k}\frac{(n-k)^{-(y_n\log y_n-y_n)}}{n^{-(x_n\log x_n-x_n)}},
\end{align}
independently of the choice of $\blackdiamond$. Recalling that $y_n=x_n\log n/\log(n-k)$ we see that
\begin{align*}
(n-k)^{-(y_n\log y_n-y_n)}
&	=\exp\left\{-\log(n-k)(y_n\log y_n-y_n)\right\}\\
&	=\exp\left\{-\log n\left(x_n\log y_n-x_n\right)\right\}\\
&	=n^{-(x_n\log y_n-x_n)},
\end{align*}
which implies
\begin{equation}\label{eq:EquivalenceForN}
\begin{split}	
\frac{(n-k)^{-(y_n\log y_n-y_n)}}{n^{-(x_n\log x_n-x_n)}} &= {n^{-(x_n\log y_n-x_n)}\over n^{-(x_n\log x_n-x_n)}}=n^{-x_n(\log y_n-\log x_n)}\\
&=n^{-x_n\log {y_n\over x_n}}=n^{-x_n\log{\log n\over\log(n-k)}}.
\end{split}
\end{equation}
Suppose now that $k=o(d)=o(n)$, in which case $n-k=n(1+o(1))$. Inserting this into \eqref{eq:EquivalenceForN} and the result in turn into \eqref{eq:asym_faces_dual_Weyl_A} we obtain
\begin{align*}
\frac{\bE f_k(G_{n,d}^\blackdiamond)}{\binom{n+1-2\sigma_\blackdiamond}k}
&	\tosim {\sqrt{\log n}\over\sqrt{\log (n(1+o(1)))}}{n\over n(1+o(1))}n^{-x_n\log{\log n\over\log(n(1+o(1)))}}\ton 1.
\end{align*}

Next, we assume that $k=\alpha d-o(d)=\alpha n-o(n)$ for some $\alpha\in(0,1)$. Then
\begin{align*}
	\frac{\sqrt{\log n}}{\sqrt{\log(n-k)}} &= \frac{\sqrt{\log n}}{\sqrt{\log(1-\alpha)+\log n+o(1)}} \ton 1,\\
	{n\over n-k} &= {n\over (1-\alpha)n+o(n)} \ton {1\over 1-\alpha}.
\end{align*}
To evaluate the last fraction in \eqref{eq:asym_faces_dual_Weyl_A} we use \eqref{eq:EquivalenceForN} to see that
$$
\frac{(n-k)^{-(y_n\log y_n-y_n)}}{n^{-(x_n\log x_n-x_n)}} = n^{-x_n\log{\log n\over\log(n-k)}} = n^{-x_n(\log\log n-\log\log(n-k))}.
$$
Now, using that
\begin{align*}
	\log\log(y+a) &= \log\log y+{a\over y\log y}+o\Big({a\over y\log y}\Big),\\
	\log(y+a) &= \log y+ \frac ay+o\Big(\frac ay\Big),
\end{align*}
as $y\to\infty$ for bounded $a=a(y)$, we see that, as $n\to\infty$,
\begin{align*}
\log\log n-\log\log(n-k) &\tosim \log\log n-\log\log (1-\alpha)n \\
&\hspace*{2.2mm}=\log\log n-\log[\log (1-\alpha)+\log n] \\
&\tosim -{\log(1-\alpha)\over\log n}.
\end{align*}
As a consequence, we have that
$$
\frac{(n-k)^{-(y_n\log y_n-y_n)}}{n^{-(x_n\log x_n-x_n)}} \tosim e^{-x_n(\log n)\big(-{\log(1-\alpha)\over\log n}\big)} \tosim (1-\alpha)^x,
$$
which proves that
$$
\frac{\bE f_k(G_{n,d}^\blackdiamond)}{\binom{n+1-2\sigma_\blackdiamond}k} \tosim (1-\alpha)^{x-1}.
$$

The last case $\alpha=1$, that is, $k=n+o(n)$ follows from a monotonicity argument. To prove this, fix some $\eps\in (0,1)$ and let $k'(n):=[k(n)(1-\eps)]$. Then, we obtain $k'(n)=(1-\eps)n+o(n)$, which enables us to apply the previous case for $k'$ with $\alpha=1-\eps\in (0,1)$. Also, since $S_n^\blackdiamond$ has the same distribution as $\sum_{k=1}^{n}\text{Bern}(\sigma_\blackdiamond/k)$ there is a natural coupling of $S_{n-k}^\blackdiamond$ and $S_{n-k'}^\blackdiamond$ such that $S_{n-k}^\blackdiamond\le S_{n-k'}^\blackdiamond$. Thus, we can use~\eqref{eq:face_numbers_2} to obtain
\begin{align*}
\frac{\bE f_k(G_{n,d}^\blackdiamond)}{\binom{n+1-2\sigma_\blackdiamond}k}
	&=\frac{\sum\limits_{\ell=1,3,\dots}\bP[S_{n-k}^\blackdiamond=n-d+\ell]}{\sum\limits_{\ell=1,3,\dots}\bP[S_n^\blackdiamond=n-d+\ell]}\\
	&\le \frac{\bP[S_{n-k}^\blackdiamond\ge n-d]}{\sum\limits_{\ell=1,3,\dots}\bP[S_n^\blackdiamond=n-d+\ell]}
	\le \frac{\bP[S_{n-k'}^\blackdiamond\ge n-d]}{\sum\limits_{\ell=1,3,\dots}\bP[S_n^\blackdiamond=n-d+\ell]}.
\end{align*}
The latter can be simplified to
\begin{align*}
\frac{\bP[S_{n-k'}^\blackdiamond\ge y_n\sigma_\blackdiamond\log(n-k')]}{\sum\limits_{\ell=1,3,\dots}\bP[S_n^\blackdiamond=x_n\sigma_\blackdiamond\log n+\ell]},
\end{align*}
where, as in the previous case, $y_n:=(x_n\log n)/\log(n-k')$ converges to $x>1$, as $n\to\infty$. However, due to~\eqref{eq:asympt_S_n>=} and~\eqref{eq:AsymptoticsGEQwithZ>1}, this is, up to a constant only depending on $x$, asymptotically equivalent to
\begin{align*}
\frac{\sum\limits_{\ell=1,3,\dots}\bP[S_{n-k'}^\blackdiamond=y_n\sigma_\blackdiamond\log(n-k')+\ell]}{\sum\limits_{\ell=1,3,\dots}\bP[S_n^\blackdiamond=x_n\sigma_\blackdiamond\log n+\ell]},
\end{align*}
which converges to $(1-(1-\eps))^x$, as $n\to\infty$, following the previous case with $\alpha=1-\eps\in (0,1)$. Letting $\eps\downarrow 0$ yields the claim.

\vspace*{2mm}
\noindent
\textsc{Case 2.}
Now, we prove the case where $x\in (0,1)$. In this case, the denominator of~\eqref{eq:face_numbers_1} converges to $1/2$ which  follows from \eqref{eq:AsymptoticsGEQwithZ<1}. In the case $k=n-n^{c+o(1)}$, that is, if $k=n-\exp\{c\log n+o(\log n)\}=n-n^{c_n}$ for some sequence $(c_n)_{n\in\NN}$ such that $c_n\to c$, as $n\to\infty$, the sequence $(y_n)_{n\in\NN}$, $y_n:=(x_n\log n)/\log (n-k)$ can be written as $y_n=x_n/c_n$. If $c\in(0,x)$, $y_n$ converges to $x/c>1$ and we can apply~\eqref{eq:AsymptoticsGEQwithZ>1} to the numerator of~\eqref{eq:face_numbers_1} to deduce that
\begin{align*}
\lim_{n\to\infty} \frac{\bE f_k(G_{n,d}^\blackdiamond)}{\binom{n+1-2\sigma_\blackdiamond}k}=\lim_{n\to\infty}\frac{\sum\limits_{\ell=1,3,\dots}\bP[S_{n-k}^\blackdiamond=(x_n/c_n)\sigma_\blackdiamond\log(n-k)+\ell]}{\sum\limits_{\ell=1,3,\dots}\bP[S_n^\blackdiamond=x_n\sigma_\blackdiamond\log n+\ell]}=0.
\end{align*}
If on the other hand $c\in(x,1)$, we have that $x/c\in(0,1)$. Thus, by~\eqref{eq:AsymptoticsGEQwithZ<1}, the numerator of~\eqref{eq:face_numbers_1} also converges to $1/2$ which yields
\begin{align*}
\lim_{n\to\infty}\frac{\bE f_k(G_{n,d}^\blackdiamond)}{\binom{n+1-2\sigma_\blackdiamond}k}=\lim_{n\to\infty}=\frac{\sum\limits_{\ell=1,3,\dots}\bP[S_{n-k}^\blackdiamond=(x_n/c_n)\sigma_\blackdiamond\log(n-k)+\ell]}{\sum\limits_{\ell=1,3,\dots}\bP[S_n^\blackdiamond=x_n\sigma_\blackdiamond\log n+\ell]}=1.
\end{align*}
In the remaining case where $k=n-\exp\{{\sigma_{\blackdiamond}^{-1}(n-d-\alpha\sqrt{x\sigma_\blackdiamond\log n})}\}$ for some $\alpha\in\RR$, we use that $n-d=\sigma_\blackdiamond x_n\log n$ to obtain
\begin{align*}
\sigma_\blackdiamond\log(n-k)+\alpha\sqrt{\sigma_\blackdiamond\log(n-k)}
&	=n-d-\alpha\sqrt{x\sigma_\blackdiamond\log n}+\alpha\sqrt{n-d-\alpha\sqrt{x\sigma_\blackdiamond\log n}}\\
&	=x_n\sigma_\blackdiamond\log n-\alpha\sqrt{x\sigma_\blackdiamond\log n}+\alpha\sqrt{x_n\sigma_\blackdiamond\log n-\alpha\sqrt{x\sigma_\blackdiamond\log n}}\\
&	=x_n\sigma_\blackdiamond\log n+o(\sqrt{\log (n-k)}),
\end{align*}
since
\begin{align*}
&\frac{-\alpha\sqrt{x\sigma_\blackdiamond\log n}+\alpha\sqrt{x_n\sigma_\blackdiamond\log n-\alpha\sqrt{x\sigma_\blackdiamond\log n}}}{\sqrt{\log (n-k)}}\\
&\quad=	-\alpha\sqrt{\frac{x\sigma_\blackdiamond\log n}{x_n\log n-\sigma_\blackdiamond^{-1}\alpha\sqrt{x\sigma_\blackdiamond\log n}}}+\alpha\sqrt{\frac{x_n\sigma_\blackdiamond\log n}{x_n\log n-\sigma_\blackdiamond^{-1}\alpha\sqrt{x\sigma_\blackdiamond\log n}}+o(1)}=o(1).
\end{align*}
Thus, the numerator of~\eqref{eq:face_numbers_1} can be written as
\begin{align*}
&\sum_{\ell=1,3,\dots}\bP[S_{n-k}^\blackdiamond=x_n\sigma_\blackdiamond\log n+\ell]\\
&\quad	=\sum_{\ell=1,3,\dots}\bP\big[S_{n-k}^\blackdiamond=\sigma_\blackdiamond\log(n-k)+\alpha\sqrt{\sigma_\blackdiamond\log(n-k)}+o\big(\sqrt{\log(n-k)}\big)+\ell\big]\ton\frac 12(1-\Phi(\alpha)),
\end{align*}
which follows from the central-limit-type result~\eqref{eq:CLT_stirling_kind_of}. This completes the proof since the denominator of~\eqref{eq:face_numbers_1} converges two $1/2$.
\end{proof}

The next theorem is a kind of large deviation principle for $\bE f_k(G_{n,d}^\blackdiamond)$ and can be considered an analogue of~\cite[Theorem~4.7]{GKT2020_HighDimension1} in the setting of Weyl random cones.

\begin{theorem}\label{theorem:large_dev_f_k_dual_Weyl_A}
Let $\blackdiamond\in\{A,B\}$ and $G_{n,d}^\blackdiamond$ be the dual of the Weyl random cone $W_{n,d}^\blackdiamond$. Consider the regime $d(n)$ and $k(n)$ with
\begin{align}\label{eq:regime_dual_Weyl_A_k-faces}
d=n-\sigma_\blackdiamond x\log n +o(\log n)\quad\text{and}\quad k=n-\exp\{c\log n+o(\log n)\}, \qquad\text{as }n\to\infty,
\end{align}
where $x>0$ and $c\in(0,1)$. Then, it holds that
\begin{align*}
\lim_{n\to\infty}\frac{1}{\log n}\log\frac{\bE f_k(G_{n,d}^\blackdiamond)}{\binom{n+1-2\sigma_\blackdiamond}k}=
\begin{cases}
x\log c-c+1	&:x>1,\\
x-x\log x+x\log c-c	&: x\in(0,1),c\in(0,x),\\
0						&:x\in(0,1),c\in(x,1).
\end{cases}
\end{align*}
\end{theorem}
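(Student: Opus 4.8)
The plan is to start from the exact identity for the normalized face number already established in the proof of Theorem~\ref{theorem:phase_trans_faces_dual_Weyl_A}. Writing $d=n-\sigma_\blackdiamond x_n\log n$ with $x_n\to x$ and $\sigma_\blackdiamond x_n\log n\in\NN$, and noting that the regime~\eqref{eq:regime_dual_Weyl_A_k-faces} forces $n-k=\exp\{c_n\log n\}$ for a sequence $c_n\to c\in(0,1)$, the identity~\eqref{eq:face_numbers_1} becomes
\[
\frac{\bE f_k(G_{n,d}^\blackdiamond)}{\binom{n+1-2\sigma_\blackdiamond}k}=\frac{\sum_{\ell=1,3,\dots}\bP[S_{n-k}^\blackdiamond=y_n\sigma_\blackdiamond\log(n-k)+\ell]}{\sum_{\ell=1,3,\dots}\bP[S_n^\blackdiamond=x_n\sigma_\blackdiamond\log n+\ell]},
\]
where $y_n:=(x_n\log n)/\log(n-k)=x_n/c_n\to x/c$. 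Since $\frac{1}{\log n}\log(\cdot)$ converts a product into a sum, I would analyze numerator and denominator separately and argue that only the stretched-exponential factor $n^{-(z\log z-z+1)}$ appearing in~\eqref{eq:AsymptoticsGEQwithZ>1} survives the normalization by $\log n$, whereas all remaining factors (the $1/\sqrt{\log n}$, the constant $\Psi_\blackdiamond$, and the rational expressions in $z$) contribute $o(\log n)$ to the logarithm and therefore vanish in the limit.

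For the denominator I distinguish $x>1$ from $x\in(0,1)$. When $x>1$, relation~\eqref{eq:AsymptoticsGEQwithZ>1} shows the denominator is of order $n^{-(x_n\log x_n-x_n+1)}$ up to lower-order factors, whence $\frac{1}{\log n}\log(\text{denominator})\to-(x\log x-x+1)$. When $x\in(0,1)$, relation~\eqref{eq:AsymptoticsGEQwithZ<1} shows the denominator converges to $1/2$, so its contribution to the rate is $0$. For the numerator the relevant index is $m=n-k\to\infty$ and the relevant ratio is $z=x/c$; here it is essential that $y_n\sigma_\blackdiamond\log(n-k)=x_n\sigma_\blackdiamond\log n=n-d\in\NN$, so that the hypotheses of~\eqref{eq:AsymptoticsGEQwithZ>1}–\eqref{eq:AsymptoticsGEQwithZ<1} hold verbatim with $n$ replaced by $n-k$. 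If $x/c>1$ (equivalently $c<x$), applying~\eqref{eq:AsymptoticsGEQwithZ>1} and using $\log(n-k)/\log n=c_n\to c$ gives
\[
\frac{1}{\log n}\log(\text{numerator})\to-c\Big(\tfrac{x}{c}\log\tfrac{x}{c}-\tfrac{x}{c}+1\Big)=x-x\log x+x\log c-c,
\]
while if $x/c<1$ (equivalently $c>x$) the numerator converges to $1/2$ by~\eqref{eq:AsymptoticsGEQwithZ<1}, contributing $0$.

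Combining these computations in the three regimes of the theorem yields the claim. For $x>1$ one always has $c<1<x$, so both numerator and denominator fall in the ``$z>1$'' case and their contributions add to $(x-x\log x+x\log c-c)+(x\log x-x+1)=x\log c-c+1$; for $x\in(0,1)$ and $c\in(0,x)$ the denominator tends to $1/2$ while the numerator yields $x-x\log x+x\log c-c$; and for $x\in(0,1)$ and $c\in(x,1)$ both numerator and denominator tend to $1/2$, giving rate $0$.

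I expect the main difficulty to be purely in the bookkeeping rather than in any new estimate: one must carefully check that every sub-stretched-exponential factor disappears after dividing by $\log n$, and, most delicately, track through the identity $\log(n-k)=c_n\log n$ that the numerator's exponent --- which is naturally measured against $\log(n-k)$ --- is correctly rescaled to be measured against $\log n$, so that the dependence on $c$ enters with precisely the right weight. Beyond the asymptotics~\eqref{eq:AsymptoticsGEQwithZ>1} and~\eqref{eq:AsymptoticsGEQwithZ<1}, no further probabilistic input is required.
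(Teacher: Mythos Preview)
Your proposal is correct and follows essentially the same route as the paper: both start from the identity~\eqref{eq:face_numbers_1}, write $y_n=x_n/c_n$, and feed the mod-Poisson asymptotics~\eqref{eq:AsymptoticsGEQwithZ>1}--\eqref{eq:AsymptoticsGEQwithZ<1} into numerator and denominator according to whether $x$ and $x/c$ exceed $1$. The only cosmetic difference is that the paper multiplies the two asymptotics first and simplifies to $C(x,c)\,n^{x_n\log c_n-c_n+1}$ (respectively $C'(x,c)\,n^{-(x_n\log(x_n/c_n)-x_n+c_n)}/\sqrt{\log n}$) before taking $\frac{1}{\log n}\log$, whereas you compute the log-rate of numerator and denominator separately and subtract; and for the case $x\in(0,1)$, $c\in(x,1)$ the paper simply invokes Theorem~\ref{theorem:phase_trans_faces_dual_Weyl_A}, which amounts to the same ``both sides tend to $1/2$'' observation you make.
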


\begin{proof}
We use the same notation as in the proof of Theorem \ref{theorem:phase_trans_faces_dual_Weyl_A}, in particular we recall that $y_n={x_n\log n\over\log(n-k)}={x_n\over c_n}$ for sequences $(x_n)_{n\ge 0}$ and $(c_n)_{n\ge 0}$ such that $\lim_{n\to\infty} x_n=x>0$, $\lim_{n\to\infty} c_n=c\in(0,1)$. In the given regime this implies that $d=n-\sigma_\blackdiamond x_n\log n$ and $k=n-n^{c_n}$. Using~\eqref{eq:face_numbers_1}, and that $\sigma_\blackdiamond x_n\log n, n^c_n\in\NN$ for each $\NN$ we obtain
\begin{align*}
\frac{\bE f_k(G_{n,d}^\blackdiamond)}{\binom{n+1-2\sigma_\blackdiamond}k}
&	=\frac{\sum\limits_{\ell=1,3,\dots}\bP[S_{n-k}^\blackdiamond=y_n\sigma_\blackdiamond\log(n-k)+\ell]}{\sum\limits_{\ell=1,3,\dots}\bP[S_n^\blackdiamond=x_n\sigma_\blackdiamond\log n+\ell]}\\
&	=\frac{\sum\limits_{\ell=1,3,\dots}\bP[S_{n-k}^\blackdiamond=\sigma_\blackdiamond(x_n/c_n)\log(n-k)+\ell]}{\sum\limits_{\ell=1,3,\dots}\bP[S_n^\blackdiamond=x_n\sigma_\blackdiamond\log n+\ell]}.
\end{align*}
In the case $x>1$, we already have that $c<x$, and thus, the asymptotic equivalence~\eqref{eq:AsymptoticsGEQwithZ>1} implies
\begin{align*}
\frac{\bE f_k(G_{n,d}^\blackdiamond)}{\binom{n+1-2\sigma_\blackdiamond}k}
&	\tosim \frac{(n-k)^{-((x_n/c_n)\log(x_n/c_n)-(x_n/c_n)+1)}}{n^{-(x_n\log x_n-x_n+1)}}\frac{\sqrt{2\pi \frac xc\log(n-k)}}{\sqrt{2\pi x\log n}}\frac{\Psi_\blackdiamond (\log(x/c))(x^{2\sigma_\blackdiamond}-1)}{\Psi_\blackdiamond(\log x)c^{\sigma_\blackdiamond}((x/c)^{2\sigma_\blackdiamond}-1)}\\
&	\tosim C(x,c)\cdot\frac{n^{-(x_n\log(x_n/c_n)-x_n+c_n)}}{n^{-(x_n\log x_n-x_n+1)}}\\
&	\tosim C(x,c)\cdot n^{x_n\log c_n-c_n+1},
\end{align*}
for some constant $C(x,c)$ independent of $n$. Note that this asymptotic relation is again independent of the choice of $\blackdiamond$.
Consequently, we obtain
\begin{align*}
\lim_{n\to\infty}\frac{1}{\log n}\log\frac{\bE f_k(G_{n,d}^\blackdiamond)}{\binom{n+1-2\sigma_\blackdiamond}k}
	=(x\log c-c+1)\lim_{n\to\infty}\frac{\log n}{\log n}=x\log c-c+1.
\end{align*}

In the case $x\in(0,1)$ and $c\in(0,x)$, the sequence $x_n/c_n$ converges to $x/c>1$ and we can use~\eqref{eq:AsymptoticsGEQwithZ>1} and~\eqref{eq:AsymptoticsGEQwithZ<1} to obtain
\begin{align*}
\frac{\bE f_k(G_{n,d}^\blackdiamond)}{\binom{n+1-2\sigma_\blackdiamond}k}
&	\tosim \frac{2(n-k)^{-((x_n/c_n)\log(x_n/c_n)-(x_n/c_n)+1)}}{\sqrt{2\pi \frac xc\log(n-k)}}\Psi_\blackdiamond\Big(\log\Big(\frac xc\Big)\Big)\frac{x}{(\frac{x^2}{c}-c)}\\
&	\tosim C'(x,c)\cdot\frac{2n^{-(x_n\log(x_n/c_n)-x_n+c_n)}}{\sqrt{2\pi x\log n}},
\end{align*}
for some constant $C'(x,c)$ independent of $n$.
Hence, the result is
\begin{align*}
\lim_{n\to\infty}\frac{1}{\log n}\log\frac{\bE f_k(G_{n,d}^\blackdiamond)}{\binom{n+1-2\sigma_\blackdiamond}k}
&	=-\Big(x\log\Big(\frac xc\Big)-x+c\Big)\lim_{d\to\infty}\frac{\log n}{\log n}-2\lim_{d\to\infty}\frac{\log(\log n)}{\log n}\\
&	=x-x\log x+x\log c-c.
\end{align*}
In the case $x\in(0,1)$ and $c\in(0,x)$, Theorem~\ref{theorem:phase_trans_faces_dual_Weyl_A} yields the claim. This completes the proof.
\end{proof}

\section{Limit theorems for the expected conic intrinsic volumes and quermassintegrals}\label{sec:limit_intr_vol_quermass}

This section contains limit theorems for the expected conic intrinsic volumes of the dual Weyl random cones and limit theorems for the expected quermassintegrals of the Weyl random cones. We start by stating and proving two limit theorems for the expected conic intrinsic volumes of the duals $G_{n,d}^\blackdiamond$ of the Weyl random cones $W_{n,d}^\blackdiamond$. The first one is a kind of large deviation principle, similar to Theorem~\ref{theorem:large_dev_f_k_dual_Weyl_A} and~\cite[Theorem~5.10]{GKT2020_HighDimension1}, while the second one is limit theorem of distributional kind for random variables putting mass $\bE \upsilon_k(G_{n,d}^\blackdiamond)$ on each value $k\in\NN_0$, similar to~\cite[Theorem~5.3]{GKT2020_HighDimension1}.

\begin{theorem}\label{theorem:large_dev_v_k_dual_Weyl_A}
Let $\blackdiamond\in\{A,B\}$ and $G_{n,d}^\blackdiamond$ be the dual of the Weyl random cone $W_{n,d}^\blackdiamond$. Consider the regime $d(n)$ and $k(n)$ with
\begin{align}\label{eq:regime_dual_Weyl_A_intr_vol}
d=n-x\sigma_\blackdiamond \log n+o(\log n)\quad\text{and}\quad k=n-y\sigma_\blackdiamond \log n +o(\log n), \qquad\text{as }d\to\infty,
\end{align}
for parameters $x>0$ and $y>x$. Then, it holds that
\begin{align*}
\frac{1}{\log n}\log \bE \upsilon_k(G_{n,d}^\blackdiamond)=
\begin{cases}
y-y\log y -1		&: x\in (0,1),\\
x\log x-y\log y +y-x	&:x>1.
\end{cases}
\end{align*}
\end{theorem}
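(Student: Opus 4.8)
The plan is to reduce the intrinsic-volume statement to an asymptotic analysis of Stirling-type probabilities, exactly as in Theorem~\ref{theorem:phase_trans_faces_dual_Weyl_A}. Starting from the explicit formula \eqref{eq:IntVolWeylDual}, for $k\in\{0,\ldots,d-1\}$ we have
$$
\bE\upsilon_k(G_{n,d}^\blackdiamond)=\frac{\blackdiamond(n,n-k)}{D^\blackdiamond(n,d)}=\frac{\blackdiamond(n,n-k)/n!\cdot\sigma_\blackdiamond^n}{\sum_{\ell=1,3,\ldots}\blackdiamond(n,n-d+\ell)/n!\cdot\sigma_\blackdiamond^n}=\frac{\bP[S_n^\blackdiamond=n-k]}{\sum_{\ell=1,3,\ldots}\bP[S_n^\blackdiamond=n-d+\ell]}.
$$
In the given regime $n-k=y\sigma_\blackdiamond\log n+o(\log n)$ and $n-d=x\sigma_\blackdiamond\log n+o(\log n)$, so both numerator and denominator are point/odd-sum probabilities of $S_n^\blackdiamond$ evaluated around $z\sigma_\blackdiamond\log n$ with $z=y$ (numerator) and $z=x$ (denominator). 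Since $y>x$, the numerator is a single point probability at the larger deviation scale, to which the sharp asymptotic \eqref{eq:AsymptoticsGEQwithZ_noSum} (with $z=y>0$) applies directly.

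\emph{First} I would treat the denominator according to whether $x>1$ or $x\in(0,1)$. For $x>1$ the odd-$\ell$ sum in the denominator is handled by \eqref{eq:AsymptoticsGEQwithZ>1}, giving denominator $\sim n^{-(x\log x-x+1)}$ times a polynomial-in-$\log n$ prefactor and a constant depending on $x$; while for $x\in(0,1)$ the denominator tends to $1/2$ by \eqref{eq:AsymptoticsGEQwithZ<1}. The numerator in both cases is, by \eqref{eq:AsymptoticsGEQwithZ_noSum} with an appropriate integer shift $\ell$ absorbing the $o(\log n)$ discrepancy, asymptotically $n^{-(y\log y-y+1)}$ up to a $(\log n)^{-1/2}$ factor and a constant $\Psi_\blackdiamond(\log y)$. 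Taking $\frac{1}{\log n}\log(\cdot)$ kills all constant and $\sqrt{\log n}$ prefactors, leaving only the exponents of $n$.

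\emph{Then} I would simply combine the exponents. In the case $x\in(0,1)$ the denominator contributes nothing to the $n$-exponent, so
$$
\frac{1}{\log n}\log\bE\upsilon_k(G_{n,d}^\blackdiamond)\to -(y\log y-y+1)=y-y\log y-1,
$$
matching the first branch. In the case $x>1$, subtracting the denominator exponent gives
$$
-(y\log y-y+1)-\bigl(-(x\log x-x+1)\bigr)=x\log x-y\log y+y-x,
$$
matching the second branch. This is a clean bookkeeping step once the two asymptotic regimes are in place.

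\textbf{The main obstacle} I anticipate is a careful justification that the $o(\log n)$ error terms in the definitions of $k$ and $d$ do not affect the leading $n$-exponent. Concretely, one writes $n-k=y_n\sigma_\blackdiamond\log n+\ell_n$ and $n-d=x_n\sigma_\blackdiamond\log n+\ell_n'$ with $y_n\to y$, $x_n\to x$ and bounded integer shifts chosen so that the main arguments $y_n\sigma_\blackdiamond\log n$ and $x_n\sigma_\blackdiamond\log n$ lie in $\sigma_\blackdiamond\log n\cdot\NN$; one must check that the resulting perturbation in the rate function $z\mapsto z\log z-z+1$ is $o(1)$ after division by $\log n$, using that this rate function is continuous and that $y_n\to y$, $x_n\to x$. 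This is the same device already used for $x_n,c_n$ in the proofs of Theorems~\ref{theorem:phase_trans_faces_dual_Weyl_A} and~\ref{theorem:large_dev_f_k_dual_Weyl_A}, so it is routine but needs to be stated. I would also remark that the result is independent of $\blackdiamond$, since the $\Psi_\blackdiamond$ factors and the $\sigma_\blackdiamond$-dependent constants all disappear on the logarithmic scale.
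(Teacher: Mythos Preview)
Your proposal is correct and follows essentially the same approach as the paper: express $\bE\upsilon_k(G_{n,d}^\blackdiamond)$ via \eqref{eq:IntVolWeylDual} as a ratio of a point mass of $S_n^\blackdiamond$ over the odd-$\ell$ sum, then apply \eqref{eq:AsymptoticsGEQwithZ_noSum} to the numerator and either \eqref{eq:AsymptoticsGEQwithZ<1} (for $x\in(0,1)$) or \eqref{eq:AsymptoticsGEQwithZ>1} (for $x>1$) to the denominator, and read off the exponents. One small slip: $D^\blackdiamond(n,d)=2\sum_{\ell=1,3,\ldots}\blackdiamond(n,n-d+\ell)$, so your displayed ratio is missing a factor $2$ in the denominator, but this is of course irrelevant on the $\frac{1}{\log n}\log(\cdot)$ scale; the paper also simply absorbs the $o(\log n)$ terms into sequences $x_n\to x$, $y_n\to y$ with $x_n\sigma_\blackdiamond\log n,\,y_n\sigma_\blackdiamond\log n\in\NN$, rather than carrying separate integer shifts $\ell_n$.
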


\begin{proof}
The regime~\eqref{eq:regime_dual_Weyl_A_intr_vol} implies that
\begin{align*}
n-d=x_n\sigma_\blackdiamond\log n\qquad\text{and}\qquad n-k=y_n\sigma_\blackdiamond\log n
\end{align*}
for sequences $(x_n)_{d\ge 0}$ and $(y_n)_{n\ge 0}$ such that $\lim_{n\to\infty}x_n= x>0$, $\lim_{n\to\infty}y_n= y>x$ and $x_n\sigma_\blackdiamond\log n,y_n\sigma_\blackdiamond\log n\in\NN$ for each $n\in\NN$. Thus, we can assume that $k<d$. Following~\eqref{eq:IntVolWeylDual}, we have
\begin{align}\label{eq:v_k_dual_Weyl_A}
\bE \upsilon_k(G_{n,d}^\blackdiamond)
&	=\frac{\blackdiamond(n,n-k)}{D^\blackdiamond(n,d)}
	=\frac{\bP[S^\blackdiamond_n=n-k]}{2\sum\limits_{\ell=1,3,\dots}\bP[S^\blackdiamond_n=n-d+\ell]}
	=\frac{\bP[S^\blackdiamond_n=y_n\log n]}{2\sum\limits_{\ell=1,3\dots}\bP[S^\blackdiamond_n=x_n\sigma_\blackdiamond\log n+\ell]}.
\end{align}
For $x\in(0,1)$, we know that the denominator of~\eqref{eq:v_k_dual_Weyl_A} converges to $1$, due to~\eqref{eq:AsymptoticsGEQwithZ<1}. Applying~\eqref{eq:AsymptoticsGEQwithZ_noSum} to the numerator yields
\begin{align*}
\bE \upsilon_k(G_{n,d}^\blackdiamond)\tosim {n^{-(y_N\log y_N-y_N+1)}\over\sqrt{2\pi y\log n}}\,\Psi_\blackdiamond(\log y).
\end{align*}
Thus, we obtain
\begin{align*}
\lim_{n\to\infty}\frac{1}{\log n}\log\bE\upsilon_k(G_{n,d}^\blackdiamond)
	&=-(y\log y-y+1)\lim_{n\to\infty}\frac{\log n}{\log n}-\lim_{n\to\infty}\frac{\log(\sqrt{\log n})}{\log n}\\
	&=y-y\log y -1,
\end{align*}
which proves the first case. In the case $x>1$ however, we use~\eqref{eq:AsymptoticsGEQwithZ>1} to obtain
\begin{align*}
\sum_{\ell=1,3\dots}\bP[S_n=x_n\log n+\ell]
	\tosim {n^{-(x_n\log x_n-x_n+1)}\over\sqrt{2\pi x\log n}}\,\Psi_\blackdiamond(\log x)\,{x^{\sigma_\blackdiamond}\over x^{2\sigma_\blackdiamond}-1}.
\end{align*}
Inserting this into~\eqref{eq:v_k_dual_Weyl_A}, yields
\begin{align*}
\bE\upsilon_k(G_{n,d}^\blackdiamond)\tosim \frac{\Psi_\blackdiamond(\log y)}{2\Psi_\blackdiamond(\log x)}\frac{x^{2\sigma_\blackdiamond}-1}{x^{\sigma_\blackdiamond}} n^{-(y_n\log y_n-y_n)+(x_n\log x_n-x_n)}.
\end{align*}
Finally, this implies
\begin{align*}
\lim_{n\to\infty}\frac{1}{\log n}\log\bE\upsilon_k(G_{n,d}^\blackdiamond)
&	=-(y\log y-y)+(x\log x-x)\lim_{n\to\infty}\frac{\log n}{\log n}\\
&	=x\log x-y\log y +y-x,
\end{align*}
which completes the proof.
\end{proof}

Next, we turn to the analysis of the conic intrinsic volume random variable $X^\blackdiamond_{n,d}$, which is the random variable on $\{0,1,\ldots,d\}$ with probability mass function given by the expected conic intrinsic volumes of the dual Weyl random cone $G_{n,d}^\blackdiamond$, that is,
$$
\bP[X^\blackdiamond_{n,d}=k]=\bE \upsilon_k(G_{n,d}^\blackdiamond),\qquad k=0,1,\ldots,d.
$$
Note that since $\bE \upsilon_k(G_{n,d}^\blackdiamond)+\bE \upsilon_1(G_{n,d}^\blackdiamond)+\ldots+\bE \upsilon_d(G_{n,d}^\blackdiamond)=1$, by the very definition of the conic intrinsic volumes, this does indeed define a random variable. We also introduce the following notation. If $N(0,1)$ denotes the standard Gaussian distribution we write $N(0,1)\,|\,\{N(0,1)<c\}$, $c\in\RR$, for a random variable having the conditional standard Gaussian distribution given $\{N(0,1)<c\}$. On other words, $N(0,1)\,|\,\{N(0,1)<c\}$ is a random variable taking values in $(-\infty,c)$ and its distribution function is given by $\Phi(t)/\Phi(c)$, $t<c$.

\begin{theorem}\label{thm:0406}
Let $\blackdiamond\in\{A,B\}$ and $G_{n,d}^\blackdiamond$ be the dual of the Weyl random cone $W_{n,d}^\blackdiamond$ and $X^\blackdiamond_{n,d}$ be the associated conic intrinsic volume random variable. Consider the regime where $d=d(n)$ is such that
\begin{align*}
d=n-x\sigma_\blackdiamond \log n+o(\log n),\qquad\text{as }n\to\infty,
\end{align*}
for a parameter $x>0$.  For $x\in (0,1)$, we have the central limit theorem
\begin{align*}
\frac{X^\blackdiamond_{n,d}-(n-\sigma_\blackdiamond \log n)}{\sqrt{\sigma_\blackdiamond\log n}}\todistr N(0,1).
\end{align*}
In the case $x>1$, it holds that
\begin{align*}
d-X^\blackdiamond_{n,d}\todistr Z_{\blackdiamond,x},
\end{align*}
where $Z_{\blackdiamond,x}$ is a random variable with values in $\NN_0$ and distribution given by
\begin{align*}
\bP[Z_{\blackdiamond,x}=0]=\frac{1}{2}\cdot\frac{x^{\sigma_\blackdiamond}-1}{x^{\sigma_\blackdiamond}},\qquad \bP[Z_{\blackdiamond,x}=k]=\frac{1}{2}\big(x^{\sigma_\blackdiamond}+1\big)\cdot\frac{x^{\sigma_\blackdiamond}-1}{x^{\sigma_\blackdiamond}}\cdot\bigg(1-\frac{x^{\sigma_\blackdiamond}-1}{x^{\sigma_\blackdiamond}}\bigg)^k,\quad k\in\NN.
\end{align*}
In the case where
$$
d=n-\sigma_\blackdiamond \log n+c\sqrt{\sigma_\blackdiamond\log n}+o(\sqrt{\log n}),\qquad\text{as }d\to\infty
$$
for a parameter $c\in\RR$, we obtain
\begin{align*}
\frac{X^\blackdiamond_{n,d}-(n-\sigma_\blackdiamond\log n)}{\sqrt{\sigma_\blackdiamond\log n}}\todistr N(0,1)\,|\,\{N(0,1)<c\}.
\end{align*}
\end{theorem}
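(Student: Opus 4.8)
The plan is to reduce all three statements to the limit theorems for the auxiliary variable $S_n^\blackdiamond$ collected in Section~\ref{section:limit_theorem_stirling}. Using \eqref{eq:IntVolWeylDual} together with $\bP[S_n^\blackdiamond=m]=\blackdiamond(n,m)\sigma_\blackdiamond^n/n!$, exactly as in \eqref{eq:v_k_dual_Weyl_A}, I would first record that for $k\in\{0,\ldots,d-1\}$
\begin{equation*}
\bP[X_{n,d}^\blackdiamond=k]=\frac{\bP[S_n^\blackdiamond=n-k]}{D_n},\qquad D_n:=2\sum_{\ell=1,3,\ldots}\bP[S_n^\blackdiamond=n-d+\ell],
\end{equation*}
while the top value $k=d$ carries the extra mass $\tfrac12\big(1-D^\blackdiamond(n,d-1)/D^\blackdiamond(n,d)\big)$. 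Writing $W_n:=n-S_n^\blackdiamond$, the central limit theorem \eqref{eq:CLT_Stirling} and symmetry of the normal law give
\begin{equation*}
\frac{W_n-(n-\sigma_\blackdiamond\log n)}{\sqrt{\sigma_\blackdiamond\log n}}\todistr N(0,1),
\end{equation*}
which is the engine behind the two Gaussian regimes. Everything then hinges on identifying $\lim_n D_n$ in each regime and on the observation that the values of $k$ near the mean lie in $\{0,\ldots,d-1\}$, where $X_{n,d}^\blackdiamond$ is a renormalised copy of $W_n$.

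For the two Gaussian cases I would argue uniformly. Fix $t$ below the cutoff and set $m:=n-\sigma_\blackdiamond\log n+t\sqrt{\sigma_\blackdiamond\log n}$; in the regime $x\in(0,1)$ one has $d-m=(1-x)\sigma_\blackdiamond\log n+o(\log n)\to\infty$, and in the critical regime $d-m=(c-t)\sqrt{\sigma_\blackdiamond\log n}+o(\sqrt{\log n})\to\infty$ for $t<c$, so in both cases $m\le d-1$ eventually and
\begin{equation*}
\bP\Big[\tfrac{X_{n,d}^\blackdiamond-(n-\sigma_\blackdiamond\log n)}{\sqrt{\sigma_\blackdiamond\log n}}\le t\Big]=\frac{\bP[W_n\le m]}{D_n}=\frac{1}{D_n}\,\bP\Big[\tfrac{W_n-(n-\sigma_\blackdiamond\log n)}{\sqrt{\sigma_\blackdiamond\log n}}\le t\Big].
\end{equation*}
For $x\in(0,1)$ the denominator satisfies $D_n\to1$ by \eqref{eq:AsymptoticsGEQwithZ<1}, so the CLT for $W_n$ yields $\Phi(t)$, the first assertion. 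For the critical scaling I would show $D_n\to\Phi(c)$, so that the ratio tends to $\Phi(t)/\Phi(c)$ for $t<c$; since $X_{n,d}^\blackdiamond\le d$ and $(d-(n-\sigma_\blackdiamond\log n))/\sqrt{\sigma_\blackdiamond\log n}\to c$, the distribution function is eventually $1$ for $t>c$, matching the conditional Gaussian $N(0,1)\,|\,\{N(0,1)<c\}$.

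For the regime $x>1$ I would instead prove convergence of the probability mass function of $d-X_{n,d}^\blackdiamond$ on $\NN_0$. For $j\ge1$ one has $\bP[d-X_{n,d}^\blackdiamond=j]=\bP[S_n^\blackdiamond=n-d+j]/D_n$; inserting the one-point asymptotics \eqref{eq:AsymptoticsGEQwithZ_noSum} with $z=x$ in the numerator and the tail asymptotics \eqref{eq:AsymptoticsGEQwithZ>1} in the denominator, all common prefactors cancel and the limit is $\tfrac12\,x^{-\sigma_\blackdiamond j}(x^{2\sigma_\blackdiamond}-1)/x^{\sigma_\blackdiamond}=\tfrac12(1-q^2)q^{\,j-1}$ with $q:=x^{-\sigma_\blackdiamond}$, which is exactly $\bP[Z_{\blackdiamond,x}=j]$. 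For the atom at $j=0$ I would use the top-value formula together with the fact that, by the same one-point asymptotics and a geometric summation, $D^\blackdiamond(n,d-1)/D^\blackdiamond(n,d)\to x^{-\sigma_\blackdiamond}=q$, whence $\bP[d-X_{n,d}^\blackdiamond=0]\to\tfrac12(1-q)=\bP[Z_{\blackdiamond,x}=0]$. Pointwise convergence of the mass function on $\NN_0$ is weak convergence, giving the geometric limit law.

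The main obstacle is the denominator in the critical regime. Here $n-d=\sigma_\blackdiamond\log n-c\sqrt{\sigma_\blackdiamond\log n}+o(\sqrt{\log n})$ sits within $O(\sqrt{\log n})$ of the mean of $S_n^\blackdiamond$, so $D_n$ is an \emph{upper}-tail, parity-restricted sum that is not directly covered by \eqref{eq:CLT_stirling_kind_of}. I would convert it into a lower-tail sum using the exact parity identity \eqref{eq:stirling_numbers_rest}: since each parity class of $S_n^\blackdiamond$ carries mass $\tfrac12$,
\begin{equation*}
D_n=1-2\sum_{\ell=1,3,\ldots}\bP[S_n^\blackdiamond=(n-d)-\ell].
\end{equation*}
Choosing $v_n$ via $n-d=\sigma_\blackdiamond\log n+v_n\sqrt{\sigma_\blackdiamond\log n}$ gives $v_n\to-c$, and \eqref{eq:CLT_stirling_kind_of} then yields $D_n\to1-\Phi(-c)=\Phi(c)$. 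A secondary technical point, handled as elsewhere in the paper through the log-concavity of $\blackdiamond(n,\cdot)$, is the justification of the geometric summations underlying $D_n$ and the ratio $D^\blackdiamond(n,d-1)/D^\blackdiamond(n,d)$ in the case $x>1$, which legitimises summing the termwise one-point asymptotics.
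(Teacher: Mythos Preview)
Your proposal is correct and follows essentially the same route as the paper: the paper also reduces each regime to the distribution function (or pmf) of $S_n^\blackdiamond$ via \eqref{eq:IntVolWeylDual}, uses \eqref{eq:AsymptoticsGEQwithZ<1} to get $D_n\to1$ for $x\in(0,1)$, uses \eqref{eq:AsymptoticsGEQwithZ_noSum} and \eqref{eq:AsymptoticsGEQwithZ>1} termwise for $x>1$, and in the critical case converts $D_n$ into a lower-tail parity sum via \eqref{eq:stirling_numbers_rest} before applying \eqref{eq:CLT_stirling_kind_of} to obtain $D_n\to\Phi(c)$. Your packaging via $W_n=n-S_n^\blackdiamond$ and the unified treatment of the two Gaussian regimes is a cosmetic streamlining of the same argument.
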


\begin{remark}
We recall that a random variable $X$ has a fractional linear distribution with parameters $a,b,c,d\in\RR$ provided that its generating function is a fractional linear function, that is $\EE[s^X]={as+b\over cs+d}$. Distributions of this type often appear in the context of branching processes, see \cite{Harris}. One can verify that the random variable $Z_{\blackdiamond,x}$ in the previous theorem has generating function $\EE[s^{Z_{\blackdiamond,x}}]={(s+1)(x^{\sigma_\blackdiamond}-1)\over 2(x^{\sigma_\blackdiamond}-s)}$ for $|s|<|x^{\sigma_\blackdiamond}|$, and is thus fractional linear with $a=b=x^{\sigma_\blackdiamond}-1$, $c=-2$ and $d=2x^{\sigma_\blackdiamond}$.
\end{remark}

\begin{proof}[Proof of Theorem \ref{thm:0406}]
Start with the case $x\in(0,1)$. Then, for $n$ sufficiently large and all $t\in\RR$, we have that
\begin{align*}
n-\sigma_\blackdiamond\log n+t\sqrt{\sigma_\blackdiamond\log n}=d+(x-1)\sigma_\blackdiamond \log d+o(\log d)<d.
\end{align*}
Thus, it follows from~\eqref{eq:IntVolWeylDual} and~\eqref{eq:v_k_dual_Weyl_A} that
\begin{align*}
\bP\left[X^\blackdiamond_{n,d}\le n-\sigma_\blackdiamond\log n+t\sqrt{\sigma_\blackdiamond\log n}\right]
&	=\sum_{k=0}^{\lfloor n-\sigma_\blackdiamond\log n+t\sqrt{\sigma_\blackdiamond\log n}\rfloor}\bE \upsilon_k(G_{n,d}^\blackdiamond)\\
&	=\sum_{k=0}^{\lfloor n-\sigma_\blackdiamond\log n+t\sqrt{\sigma_\blackdiamond\log n}\rfloor}\frac{\bP[S^\blackdiamond_n=n-k]}{2\sum\limits_{\ell=1,3\dots}\bP[S^\blackdiamond_n=n-d+\ell]}\\
&	=\sum_{k=0}^{\lfloor n-\sigma_\blackdiamond\log n+t\sqrt{\sigma_\blackdiamond\log n}\rfloor}\frac{\bP[S^\blackdiamond_n=n-k]}{2\sum\limits_{\ell=1,3\dots}\bP[S^\blackdiamond_n=x_n\sigma_\blackdiamond\log n+\ell]},
\end{align*}
where we used that $n-d=x_n\sigma_\blackdiamond\log n$ for a sequence $(x_n)_{n\ge 0}$ with $\lim_{n\to\infty} x_n=x$ and $x_n\sigma_\blackdiamond\log n\in\NN$ for each $n\in\NN$.
By~\eqref{eq:AsymptoticsGEQwithZ<1}, the denominator converges to $1$ as $d\to\infty$, which implies
\begin{align*}
\bP\left[X^\blackdiamond_{n,d}\le n-\sigma_\blackdiamond\log n+t\sqrt{\sigma_\blackdiamond\log n}\right]
&	\tosim \sum_{k=0}^{\lfloor n-\sigma_\blackdiamond\log n+t\sqrt{\sigma_\blackdiamond\log n}\rfloor}\bP[S^\blackdiamond_n=n-k]\\
&	\hspace*{2.2mm}=\bP\left[S^\blackdiamond_n\ge \sigma_\blackdiamond\log n-t\sqrt{\sigma_\blackdiamond\log n}\right]\ton 1-\Phi(-t)=\Phi(t),
\end{align*}
where we used the central limit theorem~\eqref{eq:CLT_Stirling} for the random variables $S_n^\blackdiamond$. This proves the first claim.

In the case $x>1$, we obtain for $k\in\NN$ that
\begin{align*}
\bP[d-X^\blackdiamond_{n,d}=k]
	=\bP[X^\blackdiamond_{n,d}=d-k]	
	=\frac{\blackdiamond(n,n-d+k)}{D^\blackdiamond(n,d)}
	=\frac{\bP[S^\blackdiamond_n=x_n\sigma_\blackdiamond\log n+k]}{2\sum\limits_{\ell=1,3,\dots}\bP[S^\blackdiamond_n=x_n\sigma_\blackdiamond\log n+\ell]}.
\end{align*}
The asymptotic equivalences~\eqref{eq:AsymptoticsGEQwithZ_noSum} and~\eqref{eq:AsymptoticsGEQwithZ>1} applied to the numerator and denominator, respectively, yields
\begin{align*}
\bP[d-X^\blackdiamond_{n,d}=k]
	\ton \frac{x^{-\sigma_\blackdiamond k}(x^{2\sigma_\blackdiamond}-1)}{2x^{\sigma_\blackdiamond}}=\frac{1}{2}(x^{\sigma_\blackdiamond}+1)\cdot\frac{x^{\sigma_\blackdiamond}-1}{x^{\sigma_\blackdiamond}}\cdot\bigg(1-\frac{x^{\sigma_\blackdiamond}-1}{x^{\sigma_\blackdiamond}}\bigg)^k.
\end{align*}
For $k=0$, we use~\eqref{eq:IntVolWeylDual} to deduce that
\begin{align*}
\bP[d-X^\blackdiamond_{n,d}=0]=\bP[X^\blackdiamond_{n,d}=d]=\frac{1}{2}-\frac 12\frac{D^\blackdiamond(n,d-1)}{D^\blackdiamond(n,d)}=\frac 12-\frac{\sum\limits_{\ell=1,3,\dots}\bP[S^\blackdiamond_n=x_n\sigma_\blackdiamond\log n+\ell+1]}{2\sum\limits_{\ell=1,3,\dots}\bP[S^\blackdiamond_n=x_n\sigma_\blackdiamond\log n+\ell]}.
\end{align*}
We can apply~\eqref{eq:AsymptoticsGEQwithZ>1} directly in the denominator, while in the numerator we have to apply the analogous result with $\ell$ replaced by $\ell+1$ (which follows from~\eqref{eq:AsymptoticsGEQwithZ_noSum} combined with the dominated convergence theorem). This yields
\begin{align*}
\bP[d-X^\blackdiamond_{n,d}=0]\ton \frac 12-\frac{\sum\limits_{\ell=0}^\infty x^{-\sigma_\blackdiamond(\ell+1)}}{2\sum\limits_{\ell=1,3,\dots} x^{-\sigma_\blackdiamond\ell}}=\frac{1}{2}\cdot\frac{x^{\sigma_\blackdiamond}-1}{x^{\sigma_\blackdiamond}},
\end{align*}
which completes the proof of the second claim.

In the regime $d=n-\sigma_\blackdiamond\log n+c\sqrt{\sigma_\blackdiamond\log n}+o(\sqrt{\log n})$, relation~\eqref{eq:stirling_numbers_rest} combined with the central-limit-type result~\eqref{eq:CLT_stirling_kind_of} yields
\begin{align*}
\frac{\sigma_\blackdiamond^n D^\blackdiamond(n,d)}{n!}
&=	1-2\sum_{\ell=0}^\infty\bP[S_n^\blackdiamond=n-d-2\ell-1]\\
&=	1-2\sum_{\ell=0}^\infty\bP[S_n^\blackdiamond=\sigma_\blackdiamond\log n-c_n\sqrt{\sigma_\blackdiamond\log n}-2\ell-1]\ton \Phi(c).
\end{align*}
Furthermore, for $t<c$ and sufficiently large $d$, we have
\begin{align*}
n-\sigma_\blackdiamond\log n+t\sqrt{\sigma_\blackdiamond\log n}
&	=d-(c-t)\sqrt{\sigma_\blackdiamond\log d}+o(\sqrt{\log d})<d.
\end{align*}
Thus, we obtain
\begin{align*}
\bP\left[X_{n,d}\le n-\sigma_\blackdiamond\log n+t\sqrt{\sigma_\blackdiamond\log n}\right]
&	=\sum_{k=0}^{\lfloor n-\sigma_\blackdiamond\log n+t\sqrt{\sigma_\blackdiamond\log n}\rfloor}\frac{\blackdiamond(n,n-k)}{D^\blackdiamond(n,d)}\\
&	=\frac{n!}{\sigma_\blackdiamond^n D^\blackdiamond(n,d)}\sum_{k=0}^{\lfloor n-\sigma_\blackdiamond\log n+t\sqrt{\sigma_\blackdiamond\log n}\rfloor}\bP[S^\blackdiamond_n=n-k]\\
&	\hspace*{-2.2mm}\tosim \frac{1}{\Phi(c)}\bP[S^\blackdiamond_n\ge \sigma_\blackdiamond\log n-t\sqrt{\sigma_\blackdiamond\log n}]\ton\frac{\Phi(t)}{\Phi(c)}.
\end{align*}
For $t> c$, and sufficiently large $d$, we observe that $n-\sigma_\blackdiamond\log n+t\sqrt{\sigma\blackdiamond\log n}> d$, which yields the claim.
\end{proof}

Next, we collect limit theorems for the expected conic quermassintegrals of the Weyl random cones $W_{n,d}^\blackdiamond$. The first theorem can be considered as analogue to~\cite[Theorem 4]{HugSchneiderThresholdPhenomena} and uncovers a phase transition for $\bE U_k(W_{n,d}^\blackdiamond)$ for each fixed $k\in\NN_0$. The second theorem considers the same quantity but in a regime where $k$ also tends to infinity, similar to~\cite[Theorem~9]{HugSchneiderThresholdPhenomena}.

\begin{theorem}\label{theorem:phase_trans_U_k_Weyl A}
Let $\blackdiamond\in\{A,B\}$ and let $W_{n,d}^\blackdiamond$ be the Weyl random cone. Consider the regime $d=d(n)$ such that
$$
d=n-x\sigma_\blackdiamond\log n+o(\log n),\qquad\text{as }n\to \infty,
$$ where $x>0$. Then, it holds that
\begin{align*}
\lim_{d\to\infty} 2\bE U_k(W_{n,d}^\blackdiamond)=
\begin{cases}
1				&: x\in(0,1),\\
x^{-\sigma_\blackdiamond k}			&: x>1,
\end{cases}
\end{align*}
for any fixed $k\in\NN$.
\end{theorem}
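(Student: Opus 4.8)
The plan is to reduce $2\bE U_k(W_{n,d}^\blackdiamond)$ to a quotient of one--parity tail sums of the mass function of $S_n^\blackdiamond$ and then to quote the local asymptotics already used in the proofs of Theorems~\ref{theorem:phase_trans_faces_dual_Weyl_A} and~\ref{thm:0406}. Starting from \eqref{eq:QuermassIntWeyl} we have $2\bE U_k(W_{n,d}^\blackdiamond)=D^\blackdiamond(n,d-k)/D^\blackdiamond(n,d)$, which is legitimate once $d$ is large enough that the fixed $k$ satisfies $k\le d-1$. Substituting $\blackdiamond(n,j)=(n!/\sigma_\blackdiamond^n)\bP[S_n^\blackdiamond=j]$ into $D^\blackdiamond(n,m)=2\sum_{\ell=1,3,\ldots}\blackdiamond(n,n-m+\ell)$, the common factor $2n!/\sigma_\blackdiamond^n$ cancels and, writing $n-d=x_n\sigma_\blackdiamond\log n$ with $x_n\to x$ and $x_n\sigma_\blackdiamond\log n\in\NN$ exactly as in the earlier proofs, one obtains
\begin{align*}
2\bE U_k(W_{n,d}^\blackdiamond)=\frac{\sum_{\ell=1,3,\ldots}\bP[S_n^\blackdiamond=x_n\sigma_\blackdiamond\log n+k+\ell]}{\sum_{\ell=1,3,\ldots}\bP[S_n^\blackdiamond=x_n\sigma_\blackdiamond\log n+\ell]}.
\end{align*}
Thus the numerator is the denominator with every summation index shifted up by the fixed integer $k$.

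For $x>1$ I would evaluate the denominator directly by \eqref{eq:AsymptoticsGEQwithZ>1} and the numerator by applying \eqref{eq:AsymptoticsGEQwithZ_noSum} to each summand with shift $k+\ell$. Since the factor $z^{-\sigma_\blackdiamond\ell}$ there becomes $x^{-\sigma_\blackdiamond(k+\ell)}$, summation over odd $\ell$ reproduces the denominator's geometric series $\sum_{\ell=1,3,\ldots}x^{-\sigma_\blackdiamond\ell}=x^{\sigma_\blackdiamond}/(x^{2\sigma_\blackdiamond}-1)$ multiplied by the extra constant $x^{-\sigma_\blackdiamond k}$, while the shared prefactor $n^{-(x_n\log x_n-x_n+1)}(2\pi x\log n)^{-1/2}\Psi_\blackdiamond(\log x)$ cancels in the quotient. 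Hence the limit equals $x^{-\sigma_\blackdiamond k}$, which is precisely the bookkeeping that produces the factor $x^{-\sigma_\blackdiamond k}$ in the $x>1$ part of Theorem~\ref{thm:0406}.

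For $x\in(0,1)$ the denominator tends to $1/2$ by \eqref{eq:AsymptoticsGEQwithZ<1}, and I would show the numerator does too. Its indices $x_n\sigma_\blackdiamond\log n+k+\ell$ (over odd $\ell$) form a single parity class lying above the threshold $x_n\sigma_\blackdiamond\log n+k+1\sim x\sigma_\blackdiamond\log n$, which sits well below the mean $\sim\sigma_\blackdiamond\log n$ of $S_n^\blackdiamond$; by the weak law \eqref{eq:weakLLN} the mass of $S_n^\blackdiamond$ below this threshold vanishes, hence so does the part of either parity class below it. Combined with \eqref{eq:stirling_numbers_rest}, which says each parity class carries total mass $1/2$, this forces the numerator to converge to $1/2$. (Equivalently one may absorb $k$ by setting $\tilde x_n:=x_n+k/(\sigma_\blackdiamond\log n)\to x$ and apply \eqref{eq:AsymptoticsGEQwithZ<1} verbatim, the limit $1/2$ being insensitive to the $O(1/\log n)$ change of parameter.) The quotient therefore tends to $1$.

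The step I expect to require the most care is the termwise passage to the limit inside the infinite sum in the case $x>1$, namely the interchange of $\lim_{n\to\infty}$ with $\sum_{\ell=1,3,\ldots}$ when \eqref{eq:AsymptoticsGEQwithZ_noSum} is applied summand-by-summand. As in the justification of \eqref{eq:AsymptoticsGEQwithZ>1} and \eqref{eq:AsymptoticsGEQwithZ<1}, this is handled by dominated convergence, the log-concavity of the sequences $\blackdiamond(n,\cdot)$ furnishing a summable geometric majorant uniform in $n$. Everything else is the routine cancellation of the common prefactor, and the whole argument runs uniformly in $\blackdiamond\in\{A,B\}$.
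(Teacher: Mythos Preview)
Your proposal is correct and follows essentially the same route as the paper: rewrite $2\bE U_k(W_{n,d}^\blackdiamond)=D^\blackdiamond(n,d-k)/D^\blackdiamond(n,d)$ as the quotient of odd-index tail sums of $\bP[S_n^\blackdiamond=\cdot]$, then invoke \eqref{eq:AsymptoticsGEQwithZ>1} for $x>1$ and \eqref{eq:AsymptoticsGEQwithZ<1} for $x\in(0,1)$. The paper is terser about the fixed shift by $k$ (it simply applies \eqref{eq:AsymptoticsGEQwithZ>1} and \eqref{eq:AsymptoticsGEQwithZ<1} to both numerator and denominator), whereas you spell out the termwise use of \eqref{eq:AsymptoticsGEQwithZ_noSum} and the dominated-convergence justification, but this is only a difference in level of detail, not in strategy.
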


\begin{proof}
In the given regime we have $d=n-x_n\sigma_\blackdiamond\log n$ for a sequence $(x_n)_{n\ge 0}$ such that $\lim_{n\to\infty}x_n= x$ and $x_n\sigma_\blackdiamond\log n\in\NN$ for each $n\in\NN$. Using~\eqref{eq:QuermassIntWeyl}, we obtain
\begin{align}\label{eq:quermass_weyl_probab}
2\bE U_k(W_{n,d}^\blackdiamond)
&	=\frac{D^\blackdiamond(n,d-k)}{D^\blackdiamond(n,d)}\notag\\
&	=\frac{\sum\limits_{\ell=1,3,\dots}\bP[S^\blackdiamond_n=n-d+k+\ell]}{\sum\limits_{\ell=1,3,\dots}\bP[S^\blackdiamond_n=n-d+\ell]}\notag\\
&	=\frac{\sum\limits_{\ell=1,3,\dots}\bP[S^\blackdiamond_n=x_n\sigma_\blackdiamond\log n+k+\ell]}{\sum\limits_{\ell=1,3,\dots}\bP[S^\blackdiamond_n=x_n\sigma_\blackdiamond\log n+\ell]}.
\end{align}
For $x>1$ we can insert the asymptotic equivalence~\eqref{eq:AsymptoticsGEQwithZ>1} in both the numerator and the denominator to obtain
\begin{align*}
\lim_{n\to\infty}2\bE U_k(W_{n,d}^\blackdiamond)
	= x^{-\sigma_\blackdiamond k}.
\end{align*}
For  $x\in(0,1)$ we apply~\eqref{eq:AsymptoticsGEQwithZ<1}, which yields the claim.
\end{proof}

We turn now to the case, where $k$ tends to infinity as well.

\begin{theorem}
Let $W_{n,d}^\blackdiamond$ be a Weyl random cone of type $\blackdiamond\in\{A,B\}$. Consider the regime $d=d(n)$ such that
\begin{align*}
d=n-x\sigma_\blackdiamond\log n+o(\log n)\quad \text{and}\quad k=y\sigma_\blackdiamond\log n+o(\log n),\qquad\text{as }n\to\infty,
\end{align*}
for parameters $x,y>0$. Then, it holds that
\begin{align*}
\lim_{n\to\infty}2\bE U_k(W_{n,d}^\blackdiamond)=
\begin{cases}
1			&:y<\max\{0,1-x\},\\
0			&:y>\max\{0,1-x\}.
\end{cases}
\end{align*}
\end{theorem}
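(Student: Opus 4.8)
The plan is to follow the proof of Theorem~\ref{theorem:phase_trans_U_k_Weyl A} verbatim, the only new feature being that the offset $k$ is now sent to infinity together with $n$ and $d$. Starting from the quermassintegral formula~\eqref{eq:QuermassIntWeyl} and the identity $D^\blackdiamond(n,m)=\tfrac{2\,n!}{\sigma_\blackdiamond^n}\sum_{\ell=1,3,\dots}\bP[S_n^\blackdiamond=n-m+\ell]$ (which underlies~\eqref{eq:quermass_weyl_probab} and follows from the definition of $D^\blackdiamond$ together with $\bP[S_n^\blackdiamond=j]=\blackdiamond(n,j)\sigma_\blackdiamond^n/n!$), I would write
\begin{align*}
2\bE U_k(W_{n,d}^\blackdiamond)=\frac{D^\blackdiamond(n,d-k)}{D^\blackdiamond(n,d)}
=\frac{\sum_{\ell=1,3,\dots}\bP[S_n^\blackdiamond=(x_n+y_n)\sigma_\blackdiamond\log n+\ell]}{\sum_{\ell=1,3,\dots}\bP[S_n^\blackdiamond=x_n\sigma_\blackdiamond\log n+\ell]},
\end{align*}
where $n-d=x_n\sigma_\blackdiamond\log n$ and $k=y_n\sigma_\blackdiamond\log n$ with $x_n\to x$ and $y_n\to y$, so that $n-d+k=(x_n+y_n)\sigma_\blackdiamond\log n$. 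Thus the denominator is governed by the parameter $x$ and the numerator by the parameter $x+y$, and everything reduces to deciding on which side of the critical value $1$ these two parameters lie.

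The main step is a case analysis based on the estimates~\eqref{eq:AsymptoticsGEQwithZ>1} and~\eqref{eq:AsymptoticsGEQwithZ<1}. Since $y>0$, the condition $y<\max\{0,1-x\}$ is equivalent to having $x<1$ \emph{and} $x+y<1$; in that regime both sums converge to $1/2$ by~\eqref{eq:AsymptoticsGEQwithZ<1}, so the ratio tends to $1$. The complementary condition $y>\max\{0,1-x\}$ is equivalent to $x+y>1$ with $x$ arbitrary, so the numerator, being at most $\bP[S_n^\blackdiamond\ge(x+y)\sigma_\blackdiamond\log n]$, decays by~\eqref{eq:asympt_S_n>=} like $n^{-c}$ with $c:=(x+y)\log(x+y)-(x+y)+1>0$. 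For $x<1$ the denominator tends to $1/2$ by~\eqref{eq:AsymptoticsGEQwithZ<1}, while for $x>1$ it decays like $n^{-c'}$ with $c':=x\log x-x+1$ by~\eqref{eq:AsymptoticsGEQwithZ>1}; since $g(t):=t\log t-t$ is strictly increasing on $(1,\infty)$ and $x+y>x>1$, one has $c-c'=g(x+y)-g(x)>0$, and in both cases the ratio tends to $0$. This reproduces exactly the asserted dichotomy.

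The only delicate point, which I expect to require the most care, is the borderline value $x=1$: it is covered by neither~\eqref{eq:AsymptoticsGEQwithZ>1} nor~\eqref{eq:AsymptoticsGEQwithZ<1}, and the sub-leading $o(\log n)$ term in $d$ may push the denominator below any fixed power of $n$. I would handle it by a soft comparison of growth rates rather than an exact asymptotic. The numerator is still at most $\bP[S_n^\blackdiamond\ge(x+y)\sigma_\blackdiamond\log n]\sim n^{-c}$ with $c>0$ \emph{fixed} (as $x+y>1$), whereas the denominator is bounded below by its single term $\bP[S_n^\blackdiamond=n-d+1]$, which by~\eqref{eq:AsymptoticsGEQwithZ_noSum} with limiting parameter $z=1$ is of order $n^{-o(1)}$: here the exponent $z_n\log z_n-z_n+1$ is $\sim(z_n-1)^2/2$, and $(z_n-1)^2\log n=(n-d-\sigma_\blackdiamond\log n)^2/(\sigma_\blackdiamond^2\log n)=o(\log n)$ in this regime. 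Hence the ratio is at most $n^{-c+o(1)}\to0$, consistent with $x=1$ lying in the regime $y>\max\{0,1-x\}=0$. In fact this growth-rate comparison gives a unified proof of the whole ``$0$'' case, the numerator exponent $c$ strictly exceeding the (possibly vanishing) denominator exponent in every subcase.
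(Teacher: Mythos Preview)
Your proof is correct and follows the same overall route as the paper: rewrite $2\bE U_k(W_{n,d}^\blackdiamond)$ as the ratio~\eqref{eq:quermass_2}, then do a case analysis according to whether $x$ and $x+y$ lie below or above the critical value~$1$. The cases $x\in(0,1)$ and $x>1$ are handled identically to the paper.

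The borderline case $x=1$ is where you differ. The paper shifts the denominator: it picks $z_n\to y/2$ with $z_n\sigma_\blackdiamond\log n$ an even integer, uses the monotonicity $\sum_{\ell=1,3,\dots}\bP[S_n^\blackdiamond=x_n\sigma_\blackdiamond\log n+\ell]\ge\sum_{\ell=1,3,\dots}\bP[S_n^\blackdiamond=(x_n+z_n)\sigma_\blackdiamond\log n+\ell]$, and thereby reduces to the already-proved subcase with parameters $x+y/2>1$ and $y/2>0$. Your alternative---bounding the denominator below by the single term $\bP[S_n^\blackdiamond=n-d+1]$ and invoking~\eqref{eq:AsymptoticsGEQwithZ_noSum} at $z=1$ to see it is $n^{-o(1)}$---is equally valid and arguably more direct, since it avoids the somewhat artificial choice of $z_n$.

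One caveat: your closing remark that the growth-rate comparison ``gives a unified proof of the whole `$0$' case'' is not quite right. For $x\in(0,1)$ the single-term lower bound on the denominator has exponent $\cI(x)=x\log x-x+1>0$, and this can \emph{exceed} the numerator exponent $\cI(x+y)$ when $x+y$ is only slightly above~$1$ (e.g.\ $x=0.5$, $y=0.6$ gives $\cI(0.5)\approx 0.153>\cI(1.1)\approx 0.005$). So for $x<1$ you really do need~\eqref{eq:AsymptoticsGEQwithZ<1}, as you already used in the main argument; the single-term bound only rescues the case $x=1$.
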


\begin{proof}
The given regime implies that we have
\begin{align*}
n-d=x_n\sigma_\blackdiamond\log n\qquad\text{and}\qquad k=y_n\sigma_\blackdiamond\log n,
\end{align*}
for some sequences $(x_n)_{n\ge 0}$ and $(y_n)_{n\ge 0}$ such that $\lim_{n\to\infty}x_n= x>0$, $\lim_{n\to\infty}y_n= y>0$ and $x_n\sigma_\blackdiamond\log n,y_n\sigma_\blackdiamond\log n\in\NN$ for each $n\in\NN$. Inserting this into~\eqref{eq:quermass_weyl_probab} yields
\begin{align}\label{eq:quermass_2}
2\bE U_k(W_{n,d}^\blackdiamond)
&	=\frac{\sum\limits_{\ell=1,3,\dots}\bP[S^\blackdiamond_n=(x_n+y_n)\sigma_\blackdiamond \log n+\ell]}
{\sum\limits_{\ell=1,3,\dots}\bP[S^\blackdiamond_n=x_n \sigma_\blackdiamond \log n+\ell]}.
\end{align}

Now, suppose that $y<\max\{0,1-x\}$. This already implies that $x\in(0,1)$ and $y+x\in(0,1)$ since in the case where $x\ge 1$ we would obtain $y<0$ which is impossible by assumption. By~\eqref{eq:AsymptoticsGEQwithZ<1} both the numerator and denominator of~\eqref{eq:quermass_2} converge to $1/2$ which yields
\begin{align*}
\lim_{n\to\infty}2\bE U_k(W_{n,d}^\blackdiamond)=1.
\end{align*}

The case $y>\max\{0,1-x\}$ has to be divided into three separate cases. First, suppose $x\in (0,1)$ while $x+y>1$. Then, by~\eqref{eq:AsymptoticsGEQwithZ<1}, the denominator of~\eqref{eq:quermass_2} converges to $1/2$ while the numerator converges to $0$, by \eqref{eq:AsymptoticsGEQwithZ>1}. This yields $\lim_{n\to\infty}2\bE U_k(W_{n,d}^\blackdiamond)=0$. On the other hand, if $x>1$ and $y>0$, we can apply~\eqref{eq:AsymptoticsGEQwithZ>1} in both the numerator and the denominator of~\eqref{eq:quermass_2} to obtain
\begin{align*}
2\bE U_k(W_{n,d}^\blackdiamond)\tosim C(x,y)\cdot \frac{n^{-((x_n+y_n)\log(x_n+y_n)-(x_n+y_n)+1)}}{n^{-(x_n\log x_n-x_n+1)}},
\end{align*}
where $C(x,y)$ is some constant which depends on $x$ and $y$. But since $\cI(a):=a\log a -a+1$ is strictly increasing for $a>0$ (its derivative is given by $\log a$) and $x+y>y$, we observe that
\begin{align*}
2\bE U_k(W_{n,d}^\blackdiamond)\tosim C(x,y) \cdot n^{-(\cI(x_n+y_n)-\cI(x_n))}\ton 0.
\end{align*}
It remains to show that for $x=1$ (and arbitrary $y>0$) the expectation $2\bE U_k(W_{n,d}^\blackdiamond)$ also converges to $0$, as $n\to\infty$. To this end, let $(z_n)_{n\ge 0}$ be a sequence such that $z_n\to y/2$, as $n\to\infty$, and $z_n\sigma_\blackdiamond\log n$ is a positive and even integer for each $n\in\NN$. Then, we obtain
\begin{align*}
\sum\limits_{\ell=1,3,\dots}\bP[S^\blackdiamond_n=x_n \sigma_\blackdiamond \log n+\ell]\ge \sum\limits_{\ell=1,3,\dots}\bP[S^\blackdiamond_n=(x_n+z_n) \sigma_\blackdiamond \log n+\ell].
\end{align*}
Inserting this into~\eqref{eq:quermass_2} yields
\begin{align*}
2\bE U_k(W_{n,d}^\blackdiamond)
	\le \frac{\sum\limits_{\ell=1,3,\dots}\bP[S^\blackdiamond_n=((x_n+z_n)+(y_n-z_n))\sigma_\blackdiamond \log n+\ell]}
{\sum\limits_{\ell=1,3,\dots}\bP[S^\blackdiamond_n=(x_n+z_n) \sigma_\blackdiamond \log n+\ell]}\ton 0,
\end{align*}
which follows from the previous case with $x_n$ replaced by $x_n+z_n$ and $y_n$ replaced by $y_n-z_n$, and thus, $x$ replaced by $x+y/2$ and $y$ replaced by $y/2$.
\end{proof}

\section{Limit theorems for expected statistical dimension}\label{sec:limit_stat_dim}

The \textit{statistical dimension} $\Delta(C)$ of a cone $C\subset\RR^d$ is defined as
\begin{align*}
\Delta(C):=\sum_{j=0}^dj\upsilon_j(C).
\end{align*}
The statistical dimension can be viewed as the conical extension of the dimension of a subspace, see~\cite[Section~5.3]{ALMT14}. In particular, if $L\subset\RR^d$ is an $\ell$-dimensional linear subspace for some $\ell\in\{0,1,\ldots,d\}$ then $\upsilon_k(L)=1$ in the case where $k=\ell$ and $0$ otherwise, which yields $\Delta(L)=\ell\cdot\upsilon_\ell(L)=\ell$. For more properties and an extensive account on the statistical dimension, we refer to~\cite{ALMT14}.

In this section, our goal is to understand the asymptotic behaviour of $\bE\Delta(W_{n,d}^\blackdiamond)$, for $\blackdiamond\in\{A,B\}$, as $n\to\infty$ and $d=d(n)\to\infty$ simultaneously. Following~\eqref{eq:IntVolWeyl}, the expected statistical dimension of $W_{n,d}^\blackdiamond$ is given by
\begin{align}\label{eq:stat_dim_Weyl_A}
\bE\Delta(W_{n,d}^\blackdiamond)=\sum_{k=0}^dk\frac{\blackdiamond(n,n-d+k)}{D^\blackdiamond(n,d)}=\frac{\sum_{\ell=0}^d(d-\ell)\cdot\blackdiamond(n,n-\ell)}{D^\blackdiamond(n,d)}.
\end{align}
Our next result is the analogue of \cite[Theorem 6.3]{GKT2020_HighDimension1} for Weyl cones.

\begin{theorem}\label{theorem:asym_Weyl_cone}
Let $\blackdiamond\in\{A,B\}$ and let $W_{n,d}^\blackdiamond$ be the Weyl random cone. Consider the regime where $d=d(n)$ such that
$$
d=n-x\sigma_\blackdiamond\log n+o(\log n),\qquad\text{as }n\to\infty.
$$
Then, it holds that
\begin{align*}
\bE\Delta(W_{n,d}^\blackdiamond)\tosim
\begin{cases}
\sigma_\blackdiamond\log n 		&: x\in[0,1),\\
\frac{x^{\sigma_\blackdiamond}+1}{2(x^{\sigma_\blackdiamond}-1)}	&: x>1.	
\end{cases}
\end{align*}
In the critical case where
$$
d=n-\sigma_\blackdiamond\log n+c\sqrt{\sigma_\blackdiamond\log n}+o(\sqrt{\log n}),\qquad\text{as }n\to\infty
$$
for a parameter $c\in\RR$,  it holds that
\begin{align*}
\bE\Delta(W_{n,d}^\blackdiamond)\tosim \sqrt{\sigma_\blackdiamond\log d}\left(\frac{e^{-c^2/2}}{\sqrt{2\pi}\Phi(-c)}-c\right).
\end{align*}
\end{theorem}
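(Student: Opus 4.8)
The plan is to avoid summing \eqref{eq:stat_dim_Weyl_A} directly and instead to exploit the conic duality between $W_{n,d}^\blackdiamond$ and its dual $G_{n,d}^\blackdiamond$. Comparing \eqref{eq:IntVolWeyl} with \eqref{eq:IntVolWeylDual} shows that $\bE[\upsilon_k(W_{n,d}^\blackdiamond)]=\bE[\upsilon_{d-k}(G_{n,d}^\blackdiamond)]$ for every $k\in\{0,1,\ldots,d\}$ (the two boundary terms included), so summing against $k$ and using $\sum_{j}\upsilon_j\equiv 1$ yields the identity
\[
\bE\Delta(W_{n,d}^\blackdiamond)=d-\bE\Delta(G_{n,d}^\blackdiamond)=\bE\big[d-X_{n,d}^\blackdiamond\big],
\]
where $X_{n,d}^\blackdiamond$ is precisely the conic intrinsic volume random variable analysed in Theorem~\ref{thm:0406}. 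The distributional heavy lifting is thus already done, and the remaining task is to upgrade the three distributional limits of Theorem~\ref{thm:0406} to limits of first moments and to evaluate the means of the limiting laws.

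Carrying this out regime by regime: for $x>1$, Theorem~\ref{thm:0406} gives $d-X_{n,d}^\blackdiamond\todistr Z_{\blackdiamond,x}$, a fixed fractional-linear law, so the limit is its mean $\bE[Z_{\blackdiamond,x}]=\frac{x^{\sigma_\blackdiamond}+1}{2(x^{\sigma_\blackdiamond}-1)}$, obtained by differentiating at $s=1$ the generating function recorded in the Remark following Theorem~\ref{thm:0406}. For $x\in[0,1)$ one decomposes $d-X_{n,d}^\blackdiamond=\big(d-(n-\sigma_\blackdiamond\log n)\big)-\sqrt{\sigma_\blackdiamond\log n}\,Z_n$, where $Z_n\todistr N(0,1)$ by the central limit part of Theorem~\ref{thm:0406}; the deterministic shift $d-(n-\sigma_\blackdiamond\log n)$ is the leading term (of order $\log n$), and the Gaussian fluctuation contributes only at lower order once one knows $\bE Z_n\to 0$. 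In the critical regime the same decomposition gives $\big(d-X_{n,d}^\blackdiamond\big)/\sqrt{\sigma_\blackdiamond\log n}\todistr c-\big(N(0,1)\,|\,\{N(0,1)<c\}\big)$, and the asserted $\sqrt{\sigma_\blackdiamond\log n}$-scale constant is the mean of this truncated-Gaussian limit.

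The one substantive obstacle is the passage from convergence in distribution to convergence of the first moment, since $d-X_{n,d}^\blackdiamond$ is unbounded; this is where I expect the real work to lie. What is needed is uniform integrability: of $d-X_{n,d}^\blackdiamond$ when $x>1$, and of $Z_n$ when $x\le 1$ and in the critical regime. I would obtain it by controlling, uniformly in $n$, the relevant tail of $X_{n,d}^\blackdiamond$, equivalently the lower tail of $S_n^\blackdiamond$: the single-atom asymptotics \eqref{eq:AsymptoticsGEQwithZ_noSum} together with the log-concavity of $A(n,\cdot)$ and $B(n,\cdot)$ furnish a summable geometric bound $\bP[d-X_{n,d}^\blackdiamond=k]\le C\,r^{k}$ with $r=x^{-\sigma_\blackdiamond}<1$ in the regime $x>1$, while for $x\le 1$ the Gaussian and large-deviation tails implied by \eqref{eq:ModPoisson} and \eqref{eq:CLT_Stirling} bound the contribution of the far tail. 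As a self-contained cross-check, and one that also covers the endpoint $x=0$ left out by Theorem~\ref{thm:0406}, one may instead work directly from \eqref{eq:stat_dim_Weyl_A}: rewriting it through $\bP[S_n^\blackdiamond=\cdot]$ turns it into
\[
\bE\Delta(W_{n,d}^\blackdiamond)=\frac{\bE\big[\max\{S_n^\blackdiamond-(n-d),\,0\}\big]}{2\sum_{\ell=1,3,\ldots}\bP[S_n^\blackdiamond=(n-d)+\ell]},
\]
whose denominator is governed by \eqref{eq:AsymptoticsGEQwithZ<1}, \eqref{eq:AsymptoticsGEQwithZ>1} and \eqref{eq:CLT_stirling_kind_of}, and whose positive-part numerator reproduces the same three limits.
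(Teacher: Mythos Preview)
Your duality route $\bE\Delta(W_{n,d}^\blackdiamond)=\bE[d-X_{n,d}^\blackdiamond]$, followed by Theorem~\ref{thm:0406} plus uniform integrability, is genuinely different from the paper's and more economical. The paper works directly from \eqref{eq:stat_dim_Weyl_A}: for $x\in[0,1)$ it shows $\sigma_\blackdiamond^n D^\blackdiamond(n,d)/n!\to 1$ and splits the sum $\sum_k k\,\bP[S_n^\blackdiamond=n-d+k]$ into three ranges around $(1\pm\eps)\sigma_\blackdiamond\log n$ using the weak law of large numbers for $S_n^\blackdiamond$; for $x>1$ it applies \eqref{eq:AsymptoticsGEQwithZ_noSum} termwise and sums the resulting geometric series; in the critical case it rewrites the numerator as $\bE[S_n^\blackdiamond\ind_{\{S_n^\blackdiamond\ge n-d\}}]-(n-d)\,\bP[S_n^\blackdiamond\ge n-d]$ and uses the CLT together with uniform integrability via $\sup_n\bE(Z_n^\blackdiamond)^2<\infty$. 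Your cross-check identity is exactly the expression the paper starts from in the critical regime, so the two routes merge there; what your reduction buys is that the other two regimes become corollaries of Theorem~\ref{thm:0406} rather than fresh computations.

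One substantive point: if you actually carry out your decomposition for $x\in(0,1)$ you get the shift $d-(n-\sigma_\blackdiamond\log n)=(1-x)\sigma_\blackdiamond\log n+o(\log n)$, hence $\bE\Delta(W_{n,d}^\blackdiamond)\sim(1-x)\sigma_\blackdiamond\log n$ rather than the stated $\sigma_\blackdiamond\log n$. This is not an error on your side. Your cross-check formula confirms it, since for $x<1$ one has $\bE[\max\{S_n^\blackdiamond-(n-d),0\}]\sim\bE S_n^\blackdiamond-(n-d)\sim(1-x)\sigma_\blackdiamond\log n$; the paper's direct proof in fact makes a substitution slip (after setting $\ell=n-d+k$ the weight should be $\ell-(n-d)$, not $\ell$) which loses precisely the factor $1-x$. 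Your uniform-integrability sketch is along the right lines and no harder than what the paper itself does.
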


\begin{proof}
In the regime $n=d+x\sigma_{\blackdiamond}\log d+o(\log d)$, we can find a sequence $(x_n)_{n\ge 0}$ such that $\lim_{n\to\infty}x_n= x>0$, $x_n\log n\in\NN$ for each $n\in\NN$, and $n-d=x_n\sigma_\blackdiamond\log n$.
We start by proving the first case where $x\in [0,1)$. In view of~\eqref{eq:stat_dim_Weyl_A}, we first want to determine the asymptotic behaviour of $D^\blackdiamond(n,d)$. Using~\eqref{eq:stirling_numbers_rest}, we obtain
\begin{align*}
1\ge \frac{\sigma_\blackdiamond^nD^\blackdiamond(n,d)}{n!}
&	=\frac{2\sigma_\blackdiamond^n}{n!}\left(\blackdiamond(n,n-d+1)+\blackdiamond(n,n-d+3)+\ldots\right)\notag\\
&	=1-\frac{2\sigma_\blackdiamond^n}{n!}\left(\blackdiamond(n,n-d-1)+\blackdiamond(n,n-d-3)+\ldots\right)\\
&	\ge 1-2\bP[S_n^\blackdiamond\le x_n\log n]\ton 1,
\end{align*}
due to the weak law of large numbers~\eqref{eq:weakLLN}, for $x\in[0,1)$. Thus, $\sigma_\blackdiamond^nD^\blackdiamond(n,d)/n!$ converges to $1$, as $n\to\infty$, and, for the case $x\in[0,1)$, it remains to prove that
\begin{align*}
\sum_{k=0}^dk\frac{\blackdiamond(n,n-d+k)\sigma_\blackdiamond^n}{n!}\tosim \sigma_\blackdiamond\log n.
\end{align*}
To this end, we split up the sum and obtain, for fixed $\varepsilon>0$,
\begin{align}
&\sum_{k=0}^d\frac{k}{\sigma_\blackdiamond\log n}\frac{\blackdiamond(n,n-d+k)\sigma_\blackdiamond^n}{n!}\notag\\
&	\quad=\sum_{l=x_n\sigma_\blackdiamond\log n}^{d}\frac{\ell}{\sigma_\blackdiamond\log n}\frac{\blackdiamond(n,l)\sigma_\blackdiamond^n}{n!}\notag\\
&	\quad=\sum_{\ell\in[x_n\sigma_\blackdiamond\log n,(1-\eps)\sigma_\blackdiamond\log n)}\frac{\ell}{\sigma_\blackdiamond\log n}\frac{\blackdiamond(n,\ell)\sigma_\blackdiamond^n}{n!}+\sum_{\ell\in[(1-\eps)\sigma_\blackdiamond\log n,(1+\eps)\sigma_\blackdiamond\log n]}\frac{\ell}{\sigma_\blackdiamond\log n}\frac{\blackdiamond(n,\ell)\sigma_\blackdiamond^n}{n!}\label{eq:2sums}\\
&	\quad\quad+\sum_{\ell\in((1+\eps)\sigma_\blackdiamond\log n,d]}\frac{\ell}{\sigma_\blackdiamond\log n}\frac{\blackdiamond(n,\ell)\sigma_\blackdiamond^n}{n!},\label{eq:1sum}
\end{align}
where each sum runs through all integer values in the given interval. The second sum of~\eqref{eq:2sums} is easily treated using the law of large numbers~\eqref{eq:weakLLN}:
\begin{align*}
\sum_{\ell\in[(1-\eps)\sigma_\blackdiamond\log n,(1+\eps)\sigma_\blackdiamond\log n]}\frac{\ell}{\log n}\frac{\blackdiamond(n,\ell)\sigma_\blackdiamond^n}{n!}
&	\le (1+\eps)\sum_{\ell\in[(1-\eps)\sigma_\blackdiamond\log n,(1+\eps)\sigma_\blackdiamond\log n]}\frac{\blackdiamond(n,\ell)\sigma_\blackdiamond^n}{n!}\\
&	=(1+\eps)\bP\left[S^\blackdiamond_n/\sigma_\blackdiamond\log n\in [1-\eps,1+\eps]\right]\ton 1,
\end{align*}
which yields
\begin{align*}
\limsup_{n\to\infty}\sum_{\ell\in[(1-\eps)\sigma_\blackdiamond\log n,(1+\eps)\sigma_\blackdiamond\log n]}\frac{\ell}{\log n}\frac{\blackdiamond(n,\ell)\sigma_\blackdiamond^n}{n!}\le (1+\eps).
\end{align*}
Similarly, we obtain
\begin{align*}
\liminf_{n\to\infty}\sum_{\ell\in[(1-\eps)\sigma_\blackdiamond\log n,(1+\eps)\sigma_\blackdiamond\log n]}\frac{\ell}{\log n}\frac{\blackdiamond(n,\ell)\sigma_\blackdiamond^n}{n!}\ge (1-\eps).
\end{align*}
For the first sum of~\eqref{eq:2sums}, we obtain
\begin{align*}
\sum_{\ell\in[x_n\sigma_\blackdiamond\log n,(1-\eps)\sigma_\blackdiamond\log n)}\frac{\ell}{\sigma_\blackdiamond\log n}\frac{\blackdiamond(n,\ell)\sigma_\blackdiamond^n}{n!}\le \sum_{\ell<(1-\eps)\sigma_\blackdiamond\log n}\frac{\blackdiamond(n,\ell)\sigma_\blackdiamond^n}{n!}=\bP[S_n^\blackdiamond/\sigma_\blackdiamond\log n\le 1-\eps]\ton 0,
\end{align*}
again using the law of large numbers~\eqref{eq:weakLLN}. The sum in~\eqref{eq:1sum} requires slightly more effort. We further split up the sum to obtain
\begin{align*}
&\sum_{\ell\in((1+\eps)\sigma_\blackdiamond\log n,d]}\frac{\ell}{\sigma_\blackdiamond\log n}\frac{\blackdiamond(n,\ell)\sigma_\blackdiamond^n}{n!}\\
&	\quad=\sum_{\ell\in((1+\eps)\sigma_\blackdiamond\log n,10\,\sigma_\blackdiamond\log n]}\frac{\ell}{\sigma_\blackdiamond\log n}\frac{\blackdiamond(n,\ell)\sigma^n_\blackdiamond}{n!}+\sum_{\ell\in(10\,\sigma_\blackdiamond\log n,d]}\frac{\ell}{\sigma_\blackdiamond\log n}\frac{\blackdiamond(n,\ell)\sigma^n_\blackdiamond}{n!}\\
&	\quad\le 10 \sum_{\ell\in((1+\eps)\sigma_\blackdiamond\log n,10\,\sigma_\blackdiamond\log n)}\frac{\blackdiamond(n,\ell)\sigma_\blackdiamond^n}{n!}+ n\sum_{\ell=10\,\sigma_\blackdiamond\log n}^\infty\frac{\blackdiamond(n,\ell)\sigma_\blackdiamond^n}{n!}.
\end{align*}
Note that we used $d\le n\sigma_\blackdiamond\log n$ in the last step.
The first sum converges to $0$ due to the law of large numbers, while
for the second sum we use~\eqref{eq:asympt_S_n>=} to obtain
\begin{align*}
n\sum_{\ell=10\,\sigma_\blackdiamond\log n}^\infty\frac{\ell}{\sigma_\blackdiamond\log n}\frac{\blackdiamond(n,\ell)\sigma_\blackdiamond^n}{n!}=n\bP[S_n^\blackdiamond\ge 10\sigma_\blackdiamond\log n]\tosim  n\frac{n^{-(10\log 10 -10+1)}}{\sqrt{2\pi 10\log n}}\frac{10}{9}\Psi_\blackdiamond(\log 10)\ton 0.
\end{align*}
Note that to be formally correct we would need to replace $10$ by a sequence $(z_n)_{n\ge 0}$ that converges to $10$ and satisfies $z_n\sigma_\blackdiamond\log n\in\NN$ for all $n\in\NN$.
Altogether, we obtain
\begin{align*}
\limsup_{n\to\infty}\sum_{k=0}^d\frac{k}{\sigma_\blackdiamond\log n}\frac{\blackdiamond(n,n-d+k)\sigma_\blackdiamond^n}{n!}\le (1+\eps), \quad\liminf_{n\to\infty}\sum_{k=0}^d\frac{k}{\sigma_\blackdiamond\log n}\frac{\blackdiamond(n,n-d+k)\sigma_\blackdiamond^n}{n!}\ge (1-\eps).
\end{align*}
Letting $\eps\downarrow 0$ yields
\begin{align*}
\sum_{k=0}^dk\frac{\blackdiamond(n,n-d+k)\sigma_n^\blackdiamond}{n!}\tosim \sigma_\blackdiamond\log n,
\end{align*}
which completes the proof of the case $x\in[0,1)$.

Now, we turn to the case where $x>1$ (and still $d=n-x_n\sigma_\blackdiamond\log n$). In this case, we have
\begin{align*}
\frac{\sigma_\blackdiamond^nD^\blackdiamond(n,d)}{n!}=2\sum_{l=1,3,\dots}\bP[S_n^\blackdiamond=x_n\sigma_\blackdiamond\log n+l]\tosim{N^{-(x_n\log x_n-x_n+1)}\over\sqrt{2\pi x\log n}}\,\Psi_\blackdiamond(\log x)\,{2x^{\sigma_\blackdiamond}\over x^{2\sigma_\blackdiamond}-1},
\end{align*}
by~\eqref{eq:AsymptoticsGEQwithZ>1}.
Furthermore, it holds that
\begin{align*}
\sum_{k=0}^dk\frac{\blackdiamond(n,n-d+k)\sigma_\blackdiamond^n}{n!}
&	=\sum_{k=0}^dk\frac{\blackdiamond(n,x_n\sigma_\blackdiamond\log n+k)\sigma_\blackdiamond^n}{n!}\\
&	\hspace*{-2.2mm}\tosim\sum_{k=0}^\infty{n^{-(x_n\log x_n-x_n+1)}\over\sqrt{2\pi x\log n}}\,\Psi_\blackdiamond(\log x)\,kx^{-\sigma_\blackdiamond k}\\
&	=\frac{n^{-(x_n\log n -x_n+1)}}{\sqrt{2\pi x\log n}}\Psi_\blackdiamond(\log x)\frac{x^{\sigma_\blackdiamond}}{(x^{\sigma_\blackdiamond}-1)^2},
\end{align*}
using~\eqref{eq:AsymptoticsGEQwithZ_noSum} combined with the dominated convergence theorem. Hence, we have
\begin{align*}
\bE\Delta(W_{n,d}^\blackdiamond)=\frac{\sum_{k=0}^dk\frac{\blackdiamond(n,n-d+k)}{n!}}{\sigma_\blackdiamond^nD^\blackdiamond(n,d)/n!}\tosim \frac{x^{2\sigma_\blackdiamond}-1}{2(x^{\sigma_\blackdiamond}-1)^2}=\frac{x^{\sigma_\blackdiamond}+1}{2(x^{\sigma_\blackdiamond}-1)},
\end{align*}
completing the proof of this case.
 
We finally turn to the critical regime where $d=n-\sigma_\blackdiamond\log n+c\sqrt{\sigma_\blackdiamond\log n}+o(\sqrt{\log n})$, as $d\to\infty$. Equivalently, we can write
$
d=n-\sigma_\blackdiamond\log n-c_n\sqrt{\sigma_\blackdiamond\log n}
$
for a sequence $(c_n)_{n\ge 0}$ such that $\lim_{n\to\infty}c_n=c\in \RR$. We can apply~\eqref{eq:stirling_numbers_rest} and the central-limit-type result~\ref{eq:CLT_stirling_kind_of} to obtain
\begin{align}\label{eq:asym_D^A(N,n)_critical}
\frac{\sigma_\blackdiamond^nD^\blackdiamond(n,d)}{n!}&=\frac{2\sigma_\blackdiamond^n}{n!}\sum_{\ell=1,3,\dots}\blackdiamond(n,n-d+l)=1-\frac{2\sigma_\blackdiamond^n}{n!}\sum_{\ell=1,3,\dots}\blackdiamond(n,n-d-l)\notag1\\
&	=1-2\sum_{\ell=1,3,\ldots}\bP\big[S_n^\blackdiamond = \sigma_\blackdiamond\log n+c_n\sqrt{\sigma_\blackdiamond\log n}-\ell\big]\ton 1-\Phi(c)=\Phi(-c),
\end{align}
which implies that $D^\blackdiamond(n,d)\sim \Phi(-c)n!/\sigma_\blackdiamond^n$ as $n\to\infty$. In view of~\eqref{eq:stat_dim_Weyl_A}, it is left to consider the sum $\sum_{\ell=0}^d(d-\ell)\cdot\blackdiamond(n,n-\ell)$ which can be rewritten as follows:
\begin{align}\label{eq:asym_Weyl_sum}
\sum_{\ell=0}^d(d-\ell)\cdot\blackdiamond(n,n-\ell)
&	=\sum_{\ell=0}^d(n-\ell)\cdot\blackdiamond(n,n-\ell)-\sum_{\ell=0}^d(n-d)\cdot\blackdiamond(n,n-\ell)\notag\\
&	=\sum_{\ell=n-d}^n\ell\cdot\blackdiamond(n,\ell)-(n-d)\sum_{\ell=n-d}^n\blackdiamond(n,l)\notag\\
&	=\frac{n!}{\sigma_\blackdiamond^n}\cdot\left(\bE\big[S_n^\blackdiamond\ind_{\{n-d\le S^\blackdiamond_n\le n\}}\big]-(n-d)\bP[n-d\le S_n^\blackdiamond\le n]\right).
\end{align}
Defining $Z_n^\blackdiamond:=(S_n^\blackdiamond-\sigma_\blackdiamond\log n)/\sqrt{\sigma_\blackdiamond\log n}$, we obtain
\begin{align}\label{eq:asym_1}
\bP[n-d\le S_n^\blackdiamond\le n]
&	=\bP\left[c+o(1)\le Z_n^\blackdiamond\le \frac{n-\sigma_\blackdiamond\log n}{\sqrt{\sigma_\blackdiamond\log n}}\right]\ton 1-\Phi(c)=\Phi(-c),
\end{align}
following the central limit theorem~\eqref{eq:CLT_Stirling}.
Similarly,
\begin{align*}
\bE\big[S_n^\blackdiamond\ind_{\{n-d\le Z_n^\blackdiamond\le n\}}\big]\notag	&=\bE\big[\big(\sqrt{\sigma_\blackdiamond\log n}\cdot Z_n^\blackdiamond+\sigma_\blackdiamond\log n)\ind_{\{c+o(1)\le Z_n^\blackdiamond\le (n-\sigma_\blackdiamond\log n)/\sqrt{\sigma_\blackdiamond\log n}\}}\big]\notag\\
&=\sqrt{\sigma_\blackdiamond\log n}\cdot\bE\big[Z_n^\blackdiamond\ind_{\{c+o(1)\le Z_n^\blackdiamond\le (n-\sigma_\blackdiamond\log n)/\sqrt{\sigma_\blackdiamond\log n}\}}\big]\notag\\
&\qquad\qquad+\sigma_\blackdiamond\log n\bP\left[c+o(1)\le Z_n^\blackdiamond\le \frac{n-\sigma_\blackdiamond\log n}{\sqrt{\sigma_\blackdiamond\log n}}\right].\notag
\end{align*}
Using the central limit theorem together with Skorokhod's representation theorem, we can assume that, without loss of generality, the probability space is chosen in such a way that
$$
Z_n^\blackdiamond\ind_{\{c+o(1)\le Z_n^\blackdiamond\le (n-\sigma_\blackdiamond\log n)/\sqrt{\sigma_\blackdiamond\log n}\}}\toas N(0,1)\ind_{\{N(0,1)\ge c\}}.
$$
Additionally, the sequence on the left-hand side is uniformly integrable since the sequence $(\bE (Z_n^\blackdiamond)^2)_{n\ge 0}$ is bounded. This can be observed using that, by definition of $S_n^\blackdiamond$, it holds that
\begin{align*}
\bE S_n^\blackdiamond=\sum_{k=1}^n\bE\Big[ \text{Bern}\Big(\frac{\sigma_{\blackdiamond}}{k}\Big)\Big]=\sum_{k=1}^n\frac {\sigma_\blackdiamond}k=\sigma_\blackdiamond\Big(\log n +\gamma+O\Big(\frac 1n\Big)\Big),
\end{align*}
with $\gamma$ being the Euler-Mascheroni constant, and
\begin{align*}
\var S_n^\blackdiamond=\sum_{k=1}^n\var \Big[ \text{Bern}\Big(\frac{\sigma_{\blackdiamond}}{k}\Big)\Big]=\sum_{k=1}^n\frac {\sigma_\blackdiamond}k\Big(1-\frac{\sigma_\blackdiamond}{k}\Big)\tosim\sigma_\blackdiamond\log n.
\end{align*}
This implies the convergence of expectation, and thus, combined with~\eqref{eq:asym_1} we obtain
\begin{align}\label{eq:asym_2}
\bE\big[S_n^\blackdiamond\ind_{\{n-d\le Z_n^\blackdiamond\le n\}}\big]
&\tosim \sqrt{\sigma_\blackdiamond\log n}\cdot\bE[N(0,1)\ind_{\{N(0,1)\ge c\}}]+\sigma_\blackdiamond\log n \cdot\Phi(-c)\notag\\
&	\hspace*{+2.2mm}=\sqrt{\sigma_\blackdiamond\log n}\cdot\frac{e^{-c^2/2}}{\sqrt{2\pi}}+\sigma_\blackdiamond\log n \cdot\Phi(-c).
\end{align}
Inserting \eqref{eq:asym_1} and~\eqref{eq:asym_2} into~\eqref{eq:asym_Weyl_sum} yields
\begin{align*}
\sum_{\ell=0}^d(d-\ell)\cdot\blackdiamond(n,n-\ell)
&	\tosim n!\left(\sqrt{\sigma_\blackdiamond\log n}\cdot\frac{e^{-c^2/2}}{\sqrt{2\pi}}-c\sqrt{\sigma_\blackdiamond\log n}\cdot\Phi(-c)\right)\\
&	\tosim	n!\sqrt{\sigma_\blackdiamond\log n}\left(\frac{e^{-c^2/2}}{\sqrt{2\pi}}-c\cdot\Phi(-c)\right).
\end{align*}
Combining this with~\eqref{eq:stat_dim_Weyl_A} and~\eqref{eq:asym_D^A(N,n)_critical} leaves us with
\begin{align*}
\bE\Delta(W_{n,d}^\blackdiamond)\tosim \sqrt{\sigma_\blackdiamond\log n}\left(\frac{e^{-c^2/2}}{\sqrt{2\pi}\Phi(-c)}-c\right),
\end{align*}
which completes the proof.
\end{proof}

\bigskip

\section*{Acknowledgement}

Most of this work was carried out during the Trimester Program \textit{The Interplay between High Dimensional Geometry and Probability} at the Hausdorff Research Institute for Mathematics in Bonn.\\
The authors were supported by the DFG priority program SPP 2265 \textit{Random Geometric Systems}. TG and ZK also acknowledge support by the German Research Foundation (DFG) under Germany's Excellence Strategy  EXC 2044 -- 390685587, \textit{Mathematics M\"unster: Dynamics -- Geometry -- Structure}.

\vspace*{0.5cm}
\bibliography{bibliography}
\bibliographystyle{abbrv}

\end{document}